\newtheorem{thm}{Theorem}[section]
\newtheorem{cor}[thm]{Corollary}
\newtheorem{rem}[thm]{Remark}
\newtheorem{lem}[thm]{Lemma}
\numberwithin{equation}{section}
\newcommand{\ba}{\begin{aligned}}
\newcommand{\ea}{\end{aligned}}
 \newcommand{\al}{\alpha}
 \newcommand{\be}{\beta}
 \newcommand{\Ld}{\Lambda}
 \newcommand{\de}{\delta}
 \newcommand{\vep}{\varepsilon}
 \newcommand{\Si}{\Sigma}
 \newcommand{\Om}{\Omega}
 \newcommand{\ga}{\gamma}
 \newcommand{\Ga}{\Gamma}
 \newcommand{\ep}{\epsilon}
 \newcommand{\sch}{Schr\"odinger }
 \newcommand{\E}{\mathcal{E}}
 \newcommand{\G}{\mathcal{G}}
 \renewcommand{\P}{\mathcal{P}}
 \newcommand{\Q}{\mathcal{Q}}
 \newcommand{\R}{\mathcal{R}}
 \renewcommand{\L}{\mathcal{L}}
 \renewcommand{\S}{\mathscr{S}}
 \newcommand{\N}{\mathcal{N}}
 \renewcommand{\H}{\mathcal{H}}
 \newcommand{\J}{\mathcal{J}}
 \DeclareMathOperator{\tr}{tr}
 \DeclareMathOperator{\vol}{vol}
 \DeclareMathOperator{\Hom}{Hom}
 \newcommand{\Real}{\mathbb{R}}
 \newcommand{\norm}[1]{\Vert#1\Vert}
 \def\<{\left\langle} \def\>{\right\rangle}
 \def\({\left(} \def\){\right)}
 \newcommand{\n}{\nabla}
 \newcommand{\p}{\partial}
 \renewcommand{\t}[1]{\tilde{#1}}
 \renewcommand{\b}[1]{\bar{#1}}
 \newcommand{\bn}{\bar{\n}}
 \newcommand{\tn}{\tilde{\nabla}}
\begin{document}

\title{Local Existence and Uniqueness of Skew Mean Curvature Flow}

\author{Chong Song}

\address{School of Mathematical Sciences, Xiamen University, Xiamen, 361005, P.R.China.}
\email{songchong@xmu.edu.cn}


\date{\today}

\begin{abstract}
  The Skew Mean Curvature Flow(SMCF) is a Schr\"odinger-type geometric flow canonically defined on a co-dimension two submanifold, which generalizes the famous vortex filament equation in fluid dynamics. In this paper, we prove the local existence and uniqueness of general dimensional SMCF in Euclidean spaces.
\end{abstract}

\maketitle

\tableofcontents


\section{Introduction}

\subsection{Definition and backgrounds}

The Skew Mean Curvature Flow(SMCF) is a Schr\"odinger type geometric flow defined on a co-dimension two submanifold, which evolves the submanifold along its bi-normal direction with the speed given by its mean curvature. More specifically, suppose $(\b{M},\b{g})$ is an $(n+2)$ dimensional oriented Riemannian manifold, $\Si$ is an $n$ dimensional oriented manifold, then the SMCF is a family of time-dependent immersions $F:[0,T)\times \Si\to \b{M}$ satisfying
\begin{equation}\label{e:SMCF}
\p_tF=J(F)H(F).
\end{equation}
Here $H(F)$ is the mean curvature vector field of the submanifold and $J(F)$ is the induced complex structure on its normal bundle $N\Si$. Namely, $J$ rotates a normal vector $\nu$ by $\pi/2$ positively in the normal plane, such that for any oriented basis $\{\ep_1,\cdots,\ep_n\}\subset T_x\Si$, $F_*(\ep_1)\wedge F_*(\ep_2)\wedge\cdots\wedge F_*(\ep_n)\wedge \nu\wedge (J\nu)$ coincides with the orientation of $\b{M}$.

The simplest model of SMCF is the one dimensional SMCF in three dimensional Euclidean space, which reduces to the famous Vortex Filament Equation(VFE)
\[ \p_t\ga=\ga_s\times\ga_{ss}=\kappa \textbf{b}.\]
Thus the SMCF is a geometric generalization of the VFE both in higher dimensions and in general ambient Riemannian manifolds. It naturally arises in both the context of superfluid~\cite{Jerrard-GP-2002} and classical hydrodynamics~\cite{Shashikanth-JMP-2012, Khesin-SMCF-2012}, describing the locally-induced motion of a co-dimension two vortex membrane. For a detailed introduction to the backgrounds and motivations to study the SMCF, we refer to our previous paper \cite{SS-SMCF-2019}.

The SMCF falls in the category of dispersive, or more precisely, \sch type geometric flows. Probably due to lack of analytic tools, the research of \sch type geometric flows are rather underdeveloped as compared to their parabolic (or hyperbolic) counterparts. Little is known about the SMCF except for the 1 dimensional VFE, which bears very rich structures.

It is well-known that the VFE is equivalent to a standard cubic \sch equation by the Hasimoto transformation, hence is completely integrable and behaves nicely.  For example, the initial value problem of VFE is globally well-posed for smooth initial curves, and has a global weak solution which enjoys a weak-strong uniqueness property for integral currents as initial data~\cite{JS-JEMS-2015}. Similarly, the one dimensional SMCF in a general three dimensional Riemannian manifold is also equivalent to a \sch equation and exists globally~\cite{Gomez-Thesis-2004}. However, higher dimensional SMCFs ($n\ge 2)$ are essentially different from the VFE and such a magical transformation seems impossible. Very recently, Khesin and Yang~\cite{Khesin-Yang-2019} provide an evidence from the point of view of integrable systems. They also construct an interesting example of the product of different dimensional spheres where the SMCF blows up in finite time.

From the perspective of PDE analysis, there are two main difficulties in the investigation of a high dimensional ($n\ge 2$) SMCF. First of all, for a general (non-graphical) high dimensional submanifold, there does not exist a global coordinate system or a global gauge on its normal bundle. Thus many powerful analytical tools in the study of dispersive equations, such as harmonic analysis, are invalid. Secondly, even if we write down the equation in local coordinates, the SMCF still seems very challenging. In fact, it is well-known that the mean curvature $H(F)$ is a quasi-linear and degenerate elliptic operator on $F$. In local coordinates $H(F)$ can be written as
\[ H(F)^\al=(\Delta_gF)^\al=g^{ij}(\p_i\p_jF^\al-\Ga_{ij}^k\p_kF^\al+\b{\Ga}_{\be\ga}^\al\p_iF^\be\p_jF^\ga),\]
where $g_{ij}=\b{g}(\p_iF,\p_jF)$ is the induced metric, $\Ga_{ij}^k$ and $\b{\Ga}_{\be\ga}^\al$ are the Christoffel symbols of $(\Si,g)$ and $(\b{M},\b{g})$ respectively. In particular, $\Ga_{ij}^k$ involve with the first order derivatives of $g_{ij}$ and hence second order derivatives of $F$. Moreover, the complex structure $J(F)$ also involves with the first order derivatives of $F$. Thus the high dimensional SMCF (\ref{e:SMCF}) can be regarded as a quasi-linear \sch type system that is defined on a manifold.

As far as the author knows, there are only a few results on the existence and uniqueness of general quasi-linear \sch equations on Euclidean spaces, usually under strong restrictions on the non-linear structures. See for example \cite{KPV-quai-linear-Schrodinger-2004}, the recent book~\cite{LP-book-2015} and references therein.

On the other hand, during the last twenty-years, there has been remarkable developments on the research of another typical \sch type flow, namely, the celebrated \sch map flow. Suppose $(M,g)$ is a Riemannian manifold and $(N, \omega)$ is a symplectic manifold with an almost complex structure $J_N$, then the \sch map flow $u:[0,T)\times M\to N$ is defined by the equation
\begin{equation}\label{e-schrodinger-map-flow}
  \p_tu=J_N(u)\tau(u),
\end{equation}
where $\tau(u)$ is the tension field of $u$. The local well-posedness, finite time blow-up and global behavior of \sch map flow are extensively studied, see for example \cite{Ding-ICM-2002, RRS-Global-2009, BIKT-Global-2011, MRR-blowup-2013, Li-Global-2018} and references therein. Compared to the SMCF, the \sch map flow is considerably easier since the corresponding operator $\tau(u)$ is semi-linear and elliptic. However, there is a strong connection between the SMCF and the \sch map flow.

In \cite{Song-Gauss-2017}, we found that the Gauss map of a SMCF satisfies a \sch map flow into a Grassmannian manifold with evolving metric. Indeed, the Gauss map of a SMCF in an Euclidean space is a map $\rho:[0,T)\times (\Si, g)\to G(n,2)$ mapping to the Grassmannian manifold $G(n,2)$, which is a K\"ahler manifold. If we denote the canonical complex structure on $G(n,2)$ by $\J$, then $\rho$ satisfies
\begin{equation}\label{e:Gauss}
\p_t\rho=\J(\rho)\tau_g(\rho).
\end{equation}
The key difference between (\ref{e:Gauss}) and ordinary \sch map flow (\ref{e-schrodinger-map-flow}) is that now the metric $g$ of the underlying manifold is also evolving along time, by the equation
 \[ \p_tg=-2\<JH,A\>,\]
Thus the SMCF can be regarded as a coupled \sch map flow, which is more sophisticated, and new methods must be deployed.

In \cite{SS-SMCF-2019}, the author and Sun proved the local existence of two dimensional SMCF in $\Real^4$ by applying a parabolic regularization and an energy method. The proof essentially relies on an uniform estimate of the second fundamental form of a two dimensional surface, which is obtained  by using blow-up techniques in geometric analysis. However, it seems that the key estimate only holds for two dimensional surfaces. In this paper, we will prove both local existence and uniqueness of SMCF in general dimensions.

\subsection{Main results}


Suppose $\Si_0$ is an $n$-dimensional compact oriented manifold where $n\ge 2$. Given a co-dimensional two immersion $F_0:\Si_0\to \Real^{n+2}$, we consider the initial value problem
\begin{equation}\label{e:initial-value-problem}
\left\{
\ba
&\p_tF=JH,\\
&F(0,\cdot)=F_0(\cdot).
\ea
\right.
\end{equation}

Let $k_0=[\frac{n}{2}]+1$ where $[\frac{n}{2}]$ denotes the integer part of $n/2$. Fix a small number $\de\in(0, 1)$ and let $p=n+\de$. For $k\ge k_0$, we define the energy of an immersion $F$ by
\[ \E_k(F)=\vol+\norm{H}_p^2+\norm{A}_{k,2}^2,\]
where $\norm{A}_{k,2}$ is the $H^{k,2}$-Sobolev norm with regard to the normal connection $\n$ on the product bundle $N\Si\otimes(T^*\Si)^s$, namely,
\[ \norm{A}_{k,2}:=\norm{A}_{H^{k,2}}=\(\sum_{l=0}^k\int |\n^l A|^2\)^{1/2}.\]

Our main result is the following theorem.

\begin{thm}\label{t:main-existence}
Suppose $\Si_0$ is an $n$-dimensional compact oriented manifold with $n\ge 2$ and $F_0:\Si_0\to \Real^{n+2}$ is a smooth immersion, then the initial value problem (\ref{e:initial-value-problem}) has a unique smooth solution $F:[0,T)\times \Si_0\to \Real^{n+2}$, where $T$ only depends on the energy $\E_{k_0}(F_0)$ of the initial immersion $F_0$.

Moreover, there exists a constant $C_k$ only depending on $k$ and $\E_{k_0}(F_0)$ such that for all $t\in [0,T)$,
\begin{equation}\label{e:bound-SSF}
  \E_k(F(t))\le C_k \E_k(F_0), \quad \forall k\ge k_0.
\end{equation}
\end{thm}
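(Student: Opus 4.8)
The natural strategy is the classical parabolic-regularization-plus-energy-estimate scheme, already used by the author and Sun in \cite{SS-SMCF-2019} for $n=2$, but now carried out with energies adapted to general dimension so that the estimates close without the two-dimensional blow-up argument. First I would introduce a viscosity parameter $\ep>0$ and replace \eqref{e:initial-value-problem} by the perturbed flow $\p_tF = \ep\Delta_g F + JH$ (equivalently $\p_tF = \ep H + JH$), which is a parabolic-type quasilinear system; for each fixed $\ep>0$ standard parabolic theory on the compact manifold $\Si_0$ gives a short-time smooth solution $F_\ep$ on a maximal interval $[0,T_\ep)$. The whole point is then to produce $\ep$-independent a priori bounds on a uniform time interval $[0,T]$ with $T=T(\E_{k_0}(F_0))$, so that one may pass to the limit $\ep\to 0$.

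The core of the argument is the energy hierarchy built from $\E_k(F) = \vol + \norm{H}_p^2 + \norm{A}_{k,2}^2$. I would differentiate each piece along the (regularized) flow: the volume evolves by $\p_t\,d\mu = -\<JH,H\>\,d\mu = 0$ in the $\ep=0$ case and is controlled by $\ep$ for the regularization, so $\vol$ is essentially conserved; the evolution of $A$ (and of $H=\tr A$) follows from the standard Simons-type commutation identities for the normal connection, giving $\p_t\n^lA$ as a Schr\"odinger-type expression $J\Delta\n^lA + (\text{lower-order terms polynomial in }\n^jA,\ j\le l)$. Pairing with $\n^lA$ in $L^2$, the leading term $\<J\Delta\n^lA,\n^lA\>$ is antisymmetric and integrates to zero (this is exactly where the skew structure is essential and why no derivative loss occurs), and the remaining terms are estimated by Gagliardo–Nirenberg interpolation and Sobolev embedding, using the $L^p$ control of $H$ with $p=n+\de$ precisely to handle the critical scaling in dimension $n$. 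The choice $k_0=[n/2]+1$ guarantees $H^{k_0,2}\hookrightarrow L^\infty$, so at the base level one obtains a closed inequality $\frac{d}{dt}\E_{k_0}(F) \le C\,\E_{k_0}(F)^{\,q}$ for some power $q$, hence a uniform existence time $T$ and a uniform bound on $[0,T]$; the higher-$k$ bounds \eqref{e:bound-SSF} then follow by induction on $k$, since at stage $k$ all nonlinear terms are linear in the top-order quantity $\n^kA$ times factors already controlled by $\E_{k-1}$, yielding a linear Gr\"onwall inequality $\frac{d}{dt}\E_k \le C_k\,\E_k$.

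With these uniform estimates in hand, I would extract a limit: the $F_\ep$ are bounded in $C([0,T];H^{m})$ for every $m$, so a subsequence converges (strongly in lower norms, weak-$*$ in the top norms) to a limit $F$ solving \eqref{e:initial-value-problem}, which is smooth by the uniform higher-order bounds and bootstrapping, and satisfies \eqref{e:bound-SSF}. For uniqueness, I would take two solutions $F_1,F_2$ with the same data, work in a common coordinate chart / with a synchronized gauge on the normal bundle, and estimate the difference in a low Sobolev norm (here one typically must go one derivative \emph{below} the existence regularity, or use a suitable weighted/Kato-type norm, because the equation is quasilinear and differencing the principal part $g^{ij}\p_i\p_j$ loses a derivative); the skew-symmetry of the principal operator again kills the worst term, and a Gr\"onwall argument on the difference gives $F_1\equiv F_2$.

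The main obstacle, and the technical heart of the paper, is establishing the $\ep$-\emph{independent} energy inequality at the base level $k_0$ with the \emph{sharp} dimensional scaling: one must verify that every nonlinear term arising from the commutators and from the $F$-dependence of both $J(F)$ and the Christoffel symbols $\Ga_{ij}^k$ (which, as noted in the introduction, themselves contain second derivatives of $F$) can be absorbed using only $\norm{H}_p$ with $p=n+\de$ and $\norm{A}_{k_0,2}$ — no room to spare — and that the regularizing term $\ep\Delta_gF$ contributes only favorably-signed or uniformly-bounded quantities. A secondary but real difficulty is the bookkeeping of the geometric quantities intrinsically (without a global frame), which is why all norms are taken with respect to the normal connection $\n$; keeping the commutator identities and the interpolation inequalities valid in this intrinsic, manifold setting requires care but no fundamentally new idea.
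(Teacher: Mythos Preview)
Your existence outline is essentially the paper's own argument: parabolic regularization $\p_tF=\vep H+JH$, evolution equations for $\n^lA$, the skew term $\int\<J\Delta\n^lA,\n^lA\>$ dropping out, and a Gronwall closure on $\E_{k_0}$ followed by induction on $k$. One point you leave implicit but which the paper makes the pivot of the whole scheme: on an \emph{evolving} submanifold the Sobolev/Gagliardo--Nirenberg constants are not a priori uniform in time, so the embedding $H^{k_0,2}\hookrightarrow L^\infty$ you invoke has a time-dependent constant. The paper resolves this by appealing to Mantegazza's theorem (Theorem~\ref{t-lp}), which says these constants depend only on $\vol+\norm{H}_{n+\de}$; this is precisely why that term is built into $\E_k$, and the argument then closes because one first shows $\p_t\norm{H}_p^2$ is itself controlled (Lemma giving \eqref{e-Alp}). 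You gesture at this (``using the $L^p$ control of $H$ \ldots\ to handle the critical scaling''), but it is worth naming explicitly, since without it the bootstrap does not close uniformly in $\vep$.

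Your uniqueness sketch, however, has a real gap. You propose to ``work in a common coordinate chart / with a synchronized gauge on the normal bundle'' and difference in a low norm, relying on skew-symmetry of the principal part. The paper explains in Section~\ref{s:idea-of-proof-uniqueness} exactly why this fails at the naive level: the two solutions carry \emph{different} complex structures $J,\t J$ on \emph{different} normal bundles $\N,\t\N$, so the would-be identity $J\Delta A-\t J\t\Delta\t A = J(\Delta A-\t\Delta\t A)+(J-\t J)\t\Delta\t A$ is not even well-defined, and any ad hoc extension of $J$ produces first-order terms that one cannot absorb. The paper's actual proof is substantially more elaborate: it uses the Gauss maps $\rho,\t\rho$ into the Grassmannian $G(n,2)$ and parallel transport along geodesics in $G$ to build canonical bundle isometries $\P^\bot:\t\N\to\N$, $\Q:T\t\Si\to T\Si$, and $\R_s:\t\N\otimes(T^*\t\Si)^s\to\N\otimes(T^*\Si)^s$ that \emph{intertwine} $J$ and $\t J$. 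The energy $\L$ is then built from $d(\rho,\t\rho)$, $|A-\R_2\t A|$, $|\n A-\R_3\tn\t A|$, together with $|g-\t g|$, $|\Ga-\t\Ga|$, and $|I-\Q|$, and one proves $\frac{d}{dt}\L\le C\L$ using careful estimates on $\bn\P$ (Lemma~\ref{l:estimate-P}) and on the difference of Laplacians (Corollary~\ref{l:corollary-laplacian}). This Grassmannian parallel-transport construction is the genuinely new idea for uniqueness, and your proposal does not contain it.
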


\begin{rem}
\begin{enumerate}
  \item With some efforts, one can generalize the above results to SMCFs into general ambient Riemannian manifolds with bounded geometry. But here we assume the ambient space is Euclidean space for simplicity.
  \item In the appendix, we show that the energy $\E_k$ is equivalent to the $W^{k+1,2}$-Sobolev norm of the Gauss map. See Theorem~\ref{c:equivalence} for more details.
  \item
      In \cite{DW-Schrodinger-2001}, Ding and Wang obtained the existence of a local solution to the \sch map flow with $W^{k_0+1,2}$-initial value, which seems to be optimal. In view of the fact that the Gauss map $\rho$ of the SMCF satisfies a \sch map flow (\ref{e:Gauss}),  our estimate (\ref{e:bound-SSF}) which only depends on $\E_{k_0}$, or equivalently on the $W^{k_0+1,2}$-norm of $\rho$, is consistent with Ding-Wang's result.
  \item
      It is very tempting to prove the existence of a strong solution $F\in L^\infty([0,T),W^{k+2,2})$ to the SMCF with an initial immersion $F_0\in W^{k+2,2}$. However, this can not be achieved by our estimate (\ref{e:bound-SSF}). Because the $W^{k,2}$- bound of the second fundamental form $A$ dose not directly yield a $W^{k+2,2}$-bound on the immersion $F$. There is a loss of derivative due to the diffeomorphism group of the domain manifold. Actually, we can only get a solution $F\in L^\infty([0,T),W^{k+1,2})$ in this case.
\end{enumerate}
\end{rem}

The existence part of Theorem~\ref{t:main-existence}(see Theorem~\ref{t:existence} below) is proved in Section~\ref{s:existence} by a parabolic regularization method. Namely, we first solve a perturbed SMCF which is parabolic, and then try to find a limit solution to the SMCF  by letting the perturbation vanish. The choice of the energy $\E_k$ is crucial in the proof which yields a uniform estimate for the perturbed SMCF. It is natural to put the norms of the second fundamental form $\norm{A}_{k,2}$ in the energy $\E_k$, while the term $\vol +\norm{H}_{p}$ is enclosed to ensure that the Sobolev constants in the interpolation inequalities are uniform. For more explanation of the strategy of proof, see Section~\ref{s:idea-of-proof-existence} below.

The uniqueness of the SMCF in Theorem~\ref{t:main-existence} follows from the following more general theorem. To state the result, we define the $k$-th space of immersions for $k\in \mathbb{N}$ by
\[ \S_k:=\{F:\Si_0\to \Real^{n+2}| F \text{~is an immersion with~} \norm{A}_{k,\infty}<\infty\}.\]

\begin{thm}\label{t:main-uniqueness}
Suppose $\Si_0$ is an $n$-dimensional compact oriented manifold with $n\ge 2$ and $F_0:\Si_0\to \Real^{n+2}$ is an immersion in $\S_3$ . If $F\in L^\infty([0,T],\S_2)$ and $\t{F}\in L^\infty([0,T], \S_3)$ are two solutions of the SMCF~(\ref{e:initial-value-problem}) with same initial value $F_0$, then $F=\t{F}$ a.e. on $[0,T]\times \Si_0$.
\end{thm}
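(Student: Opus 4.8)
The plan is to prove uniqueness by a Gronwall-type energy estimate on the difference of the two solutions, carried out in a fixed gauge and a fixed coordinate system. The main subtlety is that $F$ and $\tilde F$ are immersions of the \emph{same} abstract manifold $\Sigma_0$ but with a priori different induced metrics, normal bundles and complex structures, so ``$F-\tilde F$'' must be interpreted by viewing both as $\Real^{n+2}$-valued maps; the difference $w:=F-\tilde F$ is then an honest vector-valued function on $[0,T]\times\Sigma_0$ with $w(0,\cdot)=0$. Since $\Sigma_0$ is compact and both solutions lie in $L^\infty([0,T],\S_2)$, we may cover $\Sigma_0$ by finitely many coordinate charts on which the components of the induced metrics $g_{ij},\tilde g_{ij}$, their inverses, and the second fundamental forms are uniformly bounded (the $\S_3$ regularity of $\tilde F$ gives the extra derivative needed to control the quasilinear top-order terms), and it suffices to prove the estimate locally in each chart and sum. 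I would fix $\epsilon_1=\epsilon_2$ to mean the same thing for both by simply using coordinate derivatives $\partial_i$ on $\Sigma_0$, and express $JH$ for each immersion entirely in terms of $\partial_i F$, $\partial_i\partial_j F$ as in the introduction's formula $H^\alpha=g^{ij}(\partial_i\partial_j F^\alpha-\Gamma_{ij}^k\partial_k F^\alpha)$ together with the explicit formula for $J$ acting on the normal plane (the normal plane of $F$ at $x$ is the orthogonal complement in $\Real^{n+2}$ of $\mathrm{span}\{\partial_1F,\dots,\partial_nF\}$, and $J$ is rotation by $\pi/2$ there, determined algebraically by the $\partial_iF$).

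The core computation is then to differentiate $\frac{d}{dt}\int_{\Sigma_0}|w|^2\,d\mu$ — or better, because the equation is Schr\"odinger-type and the principal part $g^{ij}\partial_i\partial_j$ is real and self-adjoint, a \emph{first-order} weighted energy $\int_{\Sigma_0}(|w|^2+|\partial w|^2)\,d\mu$, or even the $\S_0$/$L^2$ energy combined with an integration-by-parts that exploits the skew (imaginary) nature of $JH$. Writing $\partial_t w = J(F)H(F)-J(\tilde F)H(\tilde F)$ and adding and subtracting $J(F)H(\tilde F)$, the right side splits into $J(F)\bigl(H(F)-H(\tilde F)\bigr)$ plus $\bigl(J(F)-J(\tilde F)\bigr)H(\tilde F)$. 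The second term is lower order: $J(F)-J(\tilde F)$ depends on $\partial F-\partial\tilde F=\partial w$ (Lipschitz in the frame data, with constant controlled by the $\S_2$ bounds), and $H(\tilde F)$ is bounded in $\S_1\subset C^0$ since $\tilde F\in\S_3$, so this contributes $\lesssim \|\partial w\|_{L^2}\le \|w\|_{\S_1}$. For the first term, $H(F)-H(\tilde F)=g^{ij}(F)\partial_i\partial_j w + (\text{terms with }\le 1 \text{ derivative of }w)$, where the coefficient discrepancies $g^{ij}(F)-g^{ij}(\tilde F)$, $\Gamma(F)-\Gamma(\tilde F)$ are again controlled by $\partial w$ and $\partial^2 w$ against the $\S_3$ bound on $\tilde F$ (this is exactly where the asymmetry in the hypotheses, $F\in\S_2$ versus $\tilde F\in\S_3$, is used: one of the two solutions must be one degree smoother to absorb the quasilinear coefficient differences). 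Pairing with $w$ (or with the appropriate first-order energy current) and integrating by parts, the top-order term $\int \langle w, J(F)g^{ij}(F)\partial_i\partial_j w\rangle$ is handled by moving one derivative off: the genuinely dangerous piece is the commutator between the rotation $J(F)$ and the derivatives, producing a term of the form $\int \langle w, (\partial J(F))\cdot g^{ij}\partial_j w\rangle$ which is order $1$ in $w$ with coefficient $\partial J(F)\sim \partial^2 F\in\S_1$ bounded — hence bounded by $\|w\|_{H^1}^2$ — while the symmetric principal part $\int\langle \partial w, J(F)g^{ij}\partial_j w\rangle$ vanishes or is benign because $J$ is skew-symmetric on the normal plane. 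This yields a differential inequality $\frac{d}{dt}\E(t)\le C\,\E(t)$ with $\E(t)=\|w(t)\|_{H^1(\Sigma_0)}^2$ and $C=C(\|F\|_{L^\infty\S_2},\|\tilde F\|_{L^\infty\S_3})$; since $\E(0)=0$, Gronwall gives $\E\equiv 0$, i.e. $w\equiv 0$ a.e., hence $F=\tilde F$.

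A technical point I would address carefully is that $w=F-\tilde F$ does not in general lie in the normal bundle of either immersion, so the decomposition $JH(F)-JH(\tilde F)$ is most cleanly bookkept by working with $\Real^{n+2}$-valued quantities throughout and only using the submanifold structure to \emph{express} $J$ and $H$; all the geometric objects ($g$, $\Gamma$, $J$, $A$) are then smooth algebraic/differential functions of the $1$-jet or $2$-jet of the immersion, with uniform Lipschitz bounds on the region determined by the $\S_2$/$\S_3$ hypotheses, which is all the argument needs. The main obstacle, and the step requiring the most care, is the treatment of the top-order term $\int\langle w, J(F)g^{ij}(F)\partial_i\partial_j w\rangle\,d\mu$: one must verify that after integration by parts the resulting expression contains no uncontrolled term of order $2$ or of order $3/2$ in $w$, which hinges on (i) the self-adjointness of the real principal part $g^{ij}\partial_i\partial_j$ in the metric $g(F)$, (ii) the pointwise skew-symmetry of $J(F)$ on the normal plane so that the principal symbol of $J(F)g^{ij}\partial_i\partial_j$ is skew-adjoint and drops out of the real energy identity, and (iii) the fact that every commutator term carries at least one derivative of $J(F)$ or of $g(F)$, i.e.\ at least two derivatives of $F$ or $\tilde F$, which are bounded by hypothesis. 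This is the standard mechanism by which Schr\"odinger-type equations admit $L^2$-type uniqueness without loss of derivatives, and the asymmetric regularity $F\in\S_2$, $\tilde F\in\S_3$ in the statement is precisely tuned to make it go through.
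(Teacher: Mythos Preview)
Your approach—an $H^1$ energy estimate on the $\Real^{n+2}$-valued difference $w=F-\t F$ in local coordinates—is fundamentally different from the paper's, and it has a genuine gap at precisely the step you flag as delicate. You are right that the principal contribution $\int\langle\p_i w,J(F)g^{ij}\p_j w\rangle$ vanishes after one integration by parts, by the skewness of $J(F)$ (extended by zero on the tangent plane) together with the symmetry of $g^{ij}$. The difficulty is the \emph{first-order} terms in $w$ produced by the quasilinear structure: linearizing $\bigl(J(F)g^{ij}-J(\t F)\t g^{ij}\bigr)\p_i\p_j\t F$ gives $C^k\p_k w$ where each $C^k$ is a real $(n{+}2)\times(n{+}2)$ matrix with no special symmetry, and the commutator $\p_i(Jg^{ij})\p_j w$ contributes similarly. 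For such an operator the symmetrization $C^k\p_k+(C^k\p_k)^*=(C^k-(C^k)^T)\p_k-(\p_kC^k)^T$ is zeroth order only when every $C^k$ is symmetric; otherwise the antisymmetric part leaves a genuine first-order operator, and the $H^1$ identity retains a term of size $\norm{\p w}\,\norm{\p^2 w}$ that cannot be absorbed. Passing to $H^2$ merely pushes the same loss one order higher. This is exactly the derivative-loss obstruction for quasilinear Schr\"odinger systems that the paper invokes in Section~\ref{s:idea-of-proof-uniqueness}, and it is why the paper explicitly rejects the energy $\norm{F-\t F}_{k,2}$. (Already for the one-dimensional VFE the naive $H^1$ estimate on $\ga-\t\ga$ fails to close; uniqueness there goes through the Hasimoto variable.)

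The paper circumvents the loss by working not with $F$ but with the second fundamental form $A$, whose evolution $\p_tA=J\Delta A+A\#A\#A$ is \emph{semilinear}: the nonlinearity carries no derivatives of $A$. Since $A$ and $\t A$ live in different bundles, the paper builds bundle isometries $\R_s:\t\N\otimes(T^*\t\Si)^s\to\N\otimes(T^*\Si)^s$ via parallel transport in the Grassmannian along geodesics joining the Gauss maps $\rho,\t\rho$, with the key compatibility $J\circ\R_s=\R_s\circ\t J$. The uniqueness energy $\L=\L_1+\L_2+\L_3$ is then assembled from $d(\rho,\t\rho)$, $A-\R_2\t A$, $\n A-\R_3\tn\t A$, together with $g-\t g$, $\Ga-\t\Ga$ and $I-\Q$; each piece satisfies an evolution inequality that closes within the system, yielding $\frac{d}{dt}\L\le C\L$ without derivative loss. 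The asymmetric hypothesis $\t F\in\S_3$ enters not to absorb quasilinear coefficient differences as you propose, but because estimating $\Delta(\R_3\tn\t A)-\R_3(\t\Delta\tn\t A)$ via Corollary~\ref{l:corollary-laplacian} requires $\norm{\tn\t A}_{2,\infty}$.
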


\begin{rem}
Note that here we only requires one of the solutions lie in the space $L^\infty([0,T], \S_3)$ while the other solution in a weaker space $L^\infty([0,T], \S_2)$. This can be compared with the weak-strong uniqueness of the VFE, i.e. one dimensional SMCF, proved by Jerrard and Smets~\cite{JS-JEMS-2015}.
\end{rem}

The proof of Theorem~\ref{t:main-uniqueness}, which is presented in Section~\ref{s:uniqueness}, turns out to more involved than that of the existence. Our proof of uniqueness of the SMCF is based on an energy method which originates from our previous work on the uniqueness of the \sch map flow~\cite{SW-uniqueness-2018}. The main idea is simply to define an energy functional $\L=\L(F,\t{F})$ which describes the distance of two solutions, and try to show that $\L$ vanishes identically in the time span $[0,T]$. However, finding such a suitable functional $\L$ takes a lot of efforts and requires a full exploration of the geometric structure of the SMCF. In our final solution, we define $\L$ by using parallel transport on the Grassmannian manifold. It is worth mentioning that during the proof, we propose a notion of intrinsic distance between the second fundamental forms of two submanifolds, which might be of independent interest. A detailed exposition of the idea behind the construction of $\L$ is given in Section~\ref{s:idea-of-proof-uniqueness}.

\subsection{Further problems}

Finally, we propose several open problems on the well-posedness of the SMCF:
\begin{enumerate}
  \item Prove the local existence and uniqueness of a weak(or strong) solution of the SMCF for more general initial submanifolds that are less regular. This might involve a well-defined notion of weak solutions of the SMCF, cf. \cite{Jerrard-GP-2002}.
  \item Prove the existence of SMCF for complete non-compact submanifolds under suitable assumptions. In particular, we are interested in the existence of a local solution to the SMCF defined on $\Real^n$.
  \item Is the SMCF globally well-posed for initial submanifolds with certain small energy?
\end{enumerate}

\section*{Acknowledgement}

Part of this work was carried out during a visit at Tsinghua University from September 2017 to February 2018. The author would like to thank Professor Yuxiang Li, Huaiyu Jian and Hongyan Tang for their support and hospitality. He is also grateful to Professor Yu Yuan, Jingyi Chen and Youde Wang for stimulating conversations and their generous help over the past several years.


\section{Local existence of SMCF}\label{s:existence}

\subsection{Idea of proof}\label{s:idea-of-proof-existence}

To prove the local existence of the SMCF, we apply an approximating scheme by considering a perturbed SMCF
\begin{equation}\label{e:pSMCF}
\p_tF=J_\vep H:=JH+\vep H, \vep>0.
\end{equation}
For $\vep>0$, the perturbed flow (\ref{e:pSMCF}) is a linear combination of the SMCF and the Mean Curvature Flow(MCF). It turns out to be a (degenerate) parabolic system which behaves similarly as the MCF. A standard argument by using the De Turck trick guarantees the existence of a local solution $F_\vep:[0,T_\vep)\to \Real^{n+2}$ to (\ref{e:pSMCF}) for any $\vep>0$. Therefore, if we can derive a uniform estimate of $F_\vep$ and a positive lower bound of $T_\vep$, then we obtain a local solution to the SMCF (\ref{e:SMCF}) by letting $\vep\to 0$.

This approximating scheme is often referred as the parabolic regularization method, and is quite standard in dealing with the existence problem of non-linear \sch equations. But it is highly non-trivial to obtain a uniform estimate that does not depend on $\vep$, especially for \sch type geometric flows, which are quasi-linear. Note that since the parabolic term vanishes as $\vep\to0$, any parabolic type estimates, including those obtained by maximum principal, would blow-up and do not yield the desired uniform bounds.

Nevertheless, we are still able to derive a uniform bound of the second fundamental form $A_\vep$ corresponding to $F_\vep$ by using an energy method. Namely, we consider the evolution equation of the $H^{k,2}$-Sobolev norms of $A_\vep$ on $[0,T_\vep)$, and try to control the nonlinear terms by Sobolev interpolation inequalities. The main obstruction for implementing such an idea is that the Sobolev constants in the standard interpolation inequalities for tensors are not uniform, as they depend on the underlying metric, which is also evolving along the flow. To overcome this problem, we will apply a uniform Sobolev inequality (see Theorem~\ref{t-lp} below) proved by Mantagazza~\cite{Mantegazza-GAFA-2002}, which says that the Sobolev constants are uniform if $\vol+\norm{H}_p$ is uniformly bounded for some $p>n$. This motivates us to enclose the term into the total energy and define
\[ \E_k=\vol+\norm{H}_p^2+\norm{A}_{k,2}^2.\]
The point is that the Sobolev constants are uniform as long as the energy $\E_k$ is uniformly bounded.

By considering the evolution equation of $\E_k$ of $F_\vep$, we manage to derive a uniform bound on $\E_k$, which in turn gives a uniform bound of $A_\vep$. Then the convergence of $F_\vep$ to a solution of the SMCF follows by a standard argument.

\subsection{Uniform Sobolev interpolation inequalities}

In this subsection, we recall several uniform Sobolev interpolation inequalities which will be used later.

First we recall the following standard universal interpolation inequality.

\begin{thm}[Aubin~\cite{Aubin-book-1998}, Chapter 3, Section 7.6]\label{t-universal}
Suppose $M$ is a compact $m$-dimensional Riemannian manifold. Let $E$ be a vector bundle on $M$, which is endowed with a metric and a compatible connection $D$. Then for any section $s\in \Ga(E\otimes (T^*M)^p)$ and exponents $q\in [1,\infty)$ and $r\in [1,\infty]$, there is a constant $C$ only depending on the dimension $m$ and the exponents such that for all $0<j<k$
\begin{equation*}
  \norm{D^j s}_p\le C\norm{D^k s}_q^{\frac{j}{k}}\norm{s}_r^{1-\frac{j}{k}},
\end{equation*}
where
\begin{equation*}
  \frac{k}{p}=\frac{j}{q}+\frac{k-j}{r}
\end{equation*}
\end{thm}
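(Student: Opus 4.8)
The plan is to reduce the statement for sections of a vector bundle $E\otimes(T^*M)^p$ to the classical Gagliardo--Nirenberg interpolation inequality for scalar functions on $\Real^m$, via a partition of unity and local trivializations. First I would choose a finite atlas $\{(U_\al,\phi_\al)\}$ of coordinate charts on the compact manifold $M$ over which the bundle $E$ is trivialized by a local orthonormal frame $\{e_a\}$, together with a subordinate partition of unity $\{\chi_\al\}$ with $\sum_\al\chi_\al^2\equiv 1$ (or simply $\sum_\al\chi_\al\equiv 1$ and absorb the difference). Writing a section $s=\sum_a s^a\, e_a\otimes(\text{coordinate frame of }(T^*M)^p)$, the covariant derivative $D^j s$ differs from the Euclidean derivatives $\p^j(\chi_\al s^a)$ of the component functions only by lower-order terms with coefficients controlled by the connection coefficients of $D$, the Christoffel symbols, and their derivatives on the fixed atlas, all of which are bounded in terms of $m$ and the fixed geometry. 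Here is where the statement as phrased is genuinely subtle: the constant $C$ is claimed to depend only on $m$ and the exponents, not on $M$. The resolution is that the inequality is scale-invariant under the rescaling $s\mapsto s_\ld(x):=s(\ld x)$, which is precisely the content of the condition $\frac{k}{p}=\frac{j}{q}+\frac{k-j}{r}$; so one proves the local estimate on a fixed ball and then the lower-order correction terms, which are \emph{not} scale invariant, are handled by the standard trick of first establishing the inequality with an additive $\norm{s}_r$ term on the right and then rescaling the domain to make that term's coefficient arbitrarily small — relegating the metric-dependent constants into a term that disappears in the limit.

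The key steps, in order, are: (1) State and invoke the Euclidean Gagliardo--Nirenberg inequality $\norm{\p^j f}_p\le C(m,j,k,q,r)\norm{\p^k f}_q^{j/k}\norm{f}_r^{1-j/k}$ for $f\in C_c^\infty(\Real^m)$, valid exactly when the exponent relation holds. (2) Localize: for each chart, apply this to $f=\chi_\al s^a$ pushed forward to $\Real^m$; expand $\p^k(\chi_\al s^a)$ by the Leibniz rule and compare with the intrinsic $(D^k s)$ read in coordinates, collecting commutator terms of the form (bounded coefficient)$\times \p^i s^b$ with $i<k$. (3) Sum over $\al$ and $a$, using the equivalence (with constants depending only on the fixed atlas and frames) between the intrinsic norms $\norm{D^j s}_p$ and $\big(\sum_{\al,a}\norm{\p^j(\chi_\al s^a)}_p^p\big)^{1/p}$, together with the triangle and Hölder inequalities to handle the finite sum and the lower-order commutator terms; this yields an inequality of the shape $\norm{D^j s}_p\le C_0\norm{D^k s}_q^{j/k}\norm{s}_r^{1-j/k}+C_1\norm{s}_r$, where $C_1$ carries all the metric dependence. (4) Remove the $C_1\norm{s}_r$ term and the metric dependence of $C_0$ by the rescaling argument sketched above, or — since the theorem is only invoked on compact $M$ where a constant depending on $M$ would still suffice for this paper — one may simply keep $C=C(M)$; however, to match Aubin's statement, the clean route is to note that both sides transform the same way under homothety of the metric $g\mapsto \ld^2 g$, drive $\ld$ appropriately, and check the correction term scales with a strictly positive power of $\ld$ that sends it to zero.

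The main obstacle I anticipate is step (4): reconciling the lower-order commutator terms (which are inevitable when passing from coordinate derivatives to covariant derivatives, and which break scale invariance) with the claim that the final constant is \emph{universal}. The careful bookkeeping — verifying that every correction term introduced in step (3) carries a coefficient that decays under the relevant rescaling, so that it can be absorbed after taking a limit — is the technical heart of the argument. Since this theorem is quoted verbatim from Aubin's book and is used here only as a black box (indeed only on compact manifolds, where a non-universal constant would already be adequate), in this paper one would of course simply cite \cite{Aubin-book-1998} rather than reproduce the argument; the sketch above is the proof one would give if pressed.
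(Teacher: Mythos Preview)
The paper does not prove this theorem; it is quoted as a black-box result from Aubin's book, and you correctly recognize this at the end of your proposal. So there is no ``paper's proof'' to match. That said, the sketch you give is \emph{not} how the result is actually proved in Aubin (or in Hamilton's Ricci flow paper, which is the other standard reference), and your step (4) is a genuine gap.

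The standard argument is entirely intrinsic and never localizes to charts. One first proves the base case $j=1$, $k=2$ by integrating by parts on the manifold:
\[
\int |Ds|^{p} = \int \langle D_i s, D_i s\rangle |Ds|^{p-2}
= -\int \langle s, D_i D_i s\rangle |Ds|^{p-2} - (p-2)\int \langle s, D_i s\rangle |Ds|^{p-3} D_i|Ds|,
\]
bounds $|D_i|Ds||\le |D^2 s|$, and applies H\"older with exponents $r,q,\frac{p}{p-2}$ to obtain $\norm{Ds}_p^2\le (p-1)\norm{D^2 s}_q\norm{s}_r$ with the \emph{explicit universal} constant $\sqrt{p-1}$. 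The general $0<j<k$ case then follows by a standard iteration, applying the base case to $D^{l}s$ for successive $l$. No connection coefficients, no Christoffel symbols, no atlas ever enter, which is precisely why the constant depends only on $m$ and the exponents.

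Your localization route, by contrast, produces constants depending on the chosen atlas and frames, and your proposed cure in step (4) does not work as stated. Under the homothety $g\mapsto\lambda^2 g$ the Christoffel symbols and the bundle connection coefficients (in fixed coordinates and frames) are \emph{unchanged}, so the commutator terms you want to kill scale exactly like the main term; and the additive correction $C_1\norm{s}_r$ scales as $\lambda^{m/r}$ while the left side scales as $\lambda^{-j+m/p}$, which (using the exponent relation) only dominates it when $\frac{m}{q}-\frac{m}{r}>k$ --- a condition that is certainly not always satisfied. Moreover the atlas itself, and hence $C_0,C_1$, is not invariant under the rescaling. So the scaling trick does not eliminate the metric dependence here; the intrinsic integration-by-parts proof is not just cleaner but essential for the universal constant.
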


\begin{rem}
The above theorem also applies for complete non-compact manifolds (cf.~\cite{Cantor-Sobolev-1975}).
\end{rem}

Next we have the famous Michael-Simon inequality for submanifolds.

\begin{thm}[Michael-Simon~\cite{MS-Sobolev-1973}]\label{t-MS}
Let $M$ is an immersed compact $m$-dimensional submanifold in the Euclidean space with mean curvature $H$. Then for any smooth function $u:M\to \Real^1$ and $p\in[1,m)$, we have
\begin{equation*}
  \norm{u}_{p^*}\le C(\norm{D u}_p+\norm{Hu}_p),
\end{equation*}
where $C$ is a constant only depending on the dimension $m$ and the exponent $p$.
\end{thm}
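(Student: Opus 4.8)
The inequality is the classical Michael--Simon Sobolev inequality, so the plan is to run the standard two-step argument: an elementary reduction to the endpoint exponent $p=1$, followed by a geometric-measure-theoretic treatment of that endpoint case built on the first variation of area. Throughout, $p^{*}=\frac{mp}{m-p}$, and since $|u|$ is Lipschitz with gradient of the same modulus as that of $u$ almost everywhere on $M$, I may assume $u\ge 0$. For the reduction, granting the case $p=1$, fix $1<p<m$, put $\be=\frac{(m-1)p}{m-p}>1$, and apply the $p=1$ inequality to $v=u^{\be}$. One checks $\be\cdot\frac{m}{m-1}=p^{*}$ and $(\be-1)\cdot\frac{p}{p-1}=p^{*}$, so H\"older's inequality gives $\norm{\n v}_{1}\le\be\,\norm{u}_{p^{*}}^{\be-1}\norm{\n u}_{p}$ and $\norm{Hv}_{1}\le\norm{u}_{p^{*}}^{\be-1}\norm{Hu}_{p}$, while $\norm{v}_{m/(m-1)}=\norm{u}_{p^{*}}^{\be}$. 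Dividing the $p=1$ inequality applied to $v$ by $\norm{u}_{p^{*}}^{\be-1}$ (which is finite, and nonzero in the nontrivial case, since $M$ is compact and $u$ smooth) yields the general case with a constant $C(m)\,\be$ depending only on $m$ and $p$.

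For the endpoint $p=1$ it remains to prove $\norm{u}_{m/(m-1)}\le C(m)(\norm{\n u}_{1}+\norm{Hu}_{1})$ for $0\le u\in C^{1}(M)$, with $M^{m}\subset\Real^{m+k}$ closed. The starting point is the tangential divergence identity (the first variation of area weighted by $u$): for a $C^{1}$ vector field $X$ on $\Real^{m+k}$,
\[ \int_{M}u\,\div_{M}X\,d\mathcal{H}^{m}=-\int_{M}\big(\n u\cdot X+u\,H\cdot X\big)\,d\mathcal{H}^{m}, \]
where $\div_{M}$ is the tangential divergence and $\n u$ the gradient of $u$ on $M$ (so that only $X^{\top}$ enters the first term on the right). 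Fix $\xi\in M$ and insert the radial fields $X(x)=\ga(|x-\xi|)(x-\xi)$, with $\ga$ a decreasing smooth approximation of $\mathbbm{1}_{[0,R]}$; computing $\div_{M}X$ (equal to $m\,\ga(|x-\xi|)$ plus a term controlled by $|x-\xi|$) and letting $\ga\to\mathbbm{1}_{[0,R]}$ yields a monotonicity inequality for $I(\rho):=\int_{B_{\rho}(\xi)}u\,d\mathcal{H}^{m}$: for $0<\si<R$, $\si^{-m}I(\si)$ is bounded by $R^{-m}I(R)$ plus an error controlled by $\int_{\si}^{R}s^{-m-1}\big(\int_{B_{s}(\xi)}(|\n u|+u|H|)\,d\mathcal{H}^{m}\big)\,ds$. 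Sending $\si\to0$, where $\si^{-m}I(\si)\to\om_{m}\,u(\xi)$ by the $C^{1}$-regularity of $M$ at $\xi$ ($\om_{m}$ the volume of the unit $m$-ball), produces a density estimate at every point where $u(\xi)>0$: roughly, either $\int_{M}(|\n u|+u|H|)$ is large, or the measure $u\,d\mathcal{H}^{m}$ has definite density near $\xi$.

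A Vitali covering argument then upgrades this pointwise density estimate to the submanifold isoperimetric inequality
\[ \mathcal{H}^{m}(\{u>t\})^{\frac{m-1}{m}}\le C(m)\Big(\mathcal{H}^{m-1}(\{u=t\})+\int_{\{u>t\}}|H|\,d\mathcal{H}^{m}\Big) \]
for almost every $t>0$, with Sard's theorem handling the null set of bad levels so that $\{u=t\}$ is a smooth hypersurface in $M$. The coarea formula $\norm{\n u}_{1}=\int_{0}^{\infty}\mathcal{H}^{m-1}(\{u=t\})\,dt$, the layer-cake identity $\int_{M}u^{\frac{m}{m-1}}=\frac{m}{m-1}\int_{0}^{\infty}t^{\frac{1}{m-1}}\mathcal{H}^{m}(\{u>t\})\,dt$, and an elementary one-dimensional Hardy inequality then combine to close the endpoint estimate.

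The whole difficulty lies in the monotonicity/density step, and its delicate feature is that no pointwise bound on $|H|$ is assumed: unlike the classical monotonicity formula for minimal or bounded-mean-curvature submanifolds, where the curvature term can be absorbed into an exponential factor, here the mean-curvature contribution must be propagated as the \emph{localized} $L^{1}$ quantity $\int_{B_{\rho}(\xi)}u|H|$ inside the monotone quantity, and one must verify that this is precisely enough to close the covering argument and recover the sharp exponent $\frac{m-1}{m}$. Making the cutoff limit rigorous, and controlling the level sets for almost every $t$, are the remaining technical points. (A more recent approach, due to Brendle, avoids the monotonicity formula altogether via the Alexandrov--Bakelman--Pucci method.)
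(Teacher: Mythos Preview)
The paper does not prove this theorem at all: it is quoted as a known result with a citation to Michael--Simon~\cite{MS-Sobolev-1973}, and then invoked (via Mantegazza's Theorem~\ref{t-lp}) as a black box in the energy estimates. So there is no ``paper's own proof'' to compare your proposal against.

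That said, your outline is a faithful sketch of the original Michael--Simon argument. The reduction from general $p$ to $p=1$ via $v=u^{\beta}$ with $\beta=\frac{(m-1)p}{m-p}$ is the standard Gagliardo--Nirenberg trick and is carried out correctly. For the endpoint, you have the right architecture: the first-variation identity with radial test fields, a weighted monotonicity inequality for $\rho^{-m}\int_{B_\rho}u$, a density dichotomy at points where $u>0$, a Vitali covering to pass to the isoperimetric inequality on superlevel sets, and then coarea plus layer-cake to finish. You also correctly flag the genuine subtlety, namely that no pointwise bound on $|H|$ is available, so the mean-curvature contribution must be carried as a localized $L^1$ term rather than absorbed exponentially. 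The one place your sketch is thin is the passage from the pointwise density estimate to the level-set isoperimetric inequality: in Michael--Simon this step is done somewhat differently (working directly with a carefully chosen stopping radius and a $5r$-covering rather than first proving an isoperimetric inequality for each level set), and making your version precise would require some care with the dependence of the ``good radius'' on $\xi$ and $t$. But as an outline it is sound, and the mention of Brendle's ABP alternative is apt.
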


The following theorem is a quite straight forward application of Theorem~\ref{t-MS}, and provides the uniform interpolation theorem which is used in this paper. The proof involves first proving (\ref{e:interpolation}) for $a=1, j=0, k=1$ by applying Theorem~\ref{t-lp}, and then following a standard procedure as in the proof of classical Gagliardo-Nirenberg inequality.

\begin{thm}[Mantegazza~\cite{Mantegazza-GAFA-2002}]\label{t-lp}
Suppose $M$ is an $m$-dimensional compact immersed submanifold of the Euclidean space $\Real^n$. If $Vol+\norm{H}_{n+\de}\le B$ for some $\de>0$, then we have uniform Gagliardo-Nirenberg inequality for any covariant tensor $T$. In particular, there is a constant $C$ only depending on $B, m$ and the exponents such that
\begin{equation}\label{e:interpolation}
\norm{D^jT}_p\le C\norm{T}_{k,q}^a\norm{T}_r^{1-a},
\end{equation}
where $j\in[0,k], p,q, r\in[1,\infty]$ and $a\in[j/k,1]$($a\neq 1 $ if $q=m/(k-j)\neq 1$) satisfies
\[ \frac{1}{p}=\frac{j}{m}+a\(\frac{1}{q}-\frac{k}{m}\)+\frac{1-a}{r}>0.\]
In particular, when $kq>m$, we have
\[ \norm{T}_\infty\le C\norm{T}_{W^{k,q}}.\]
\end{thm}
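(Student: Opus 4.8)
The plan is to reduce everything to the single ``base'' Sobolev inequality
\[ \norm{T}_{q^*}\le C\big(\norm{DT}_q+\norm{T}_q\big),\qquad q^*=\tfrac{mq}{m-q},\quad 1\le q<m, \]
with $C=C(B,m,q)$, and then to recover the full Gagliardo--Nirenberg scale $(\ref{e:interpolation})$ from it by the classical bootstrap. To obtain the base inequality I would apply the Michael--Simon inequality (Theorem~\ref{t-MS}) not to $u=|T|$ directly but to $u=|T|^\ga$ for a suitable power $\ga\ge 1$ chosen so that the resulting exponents match $q^*$; here one uses Kato's inequality $\big|D|T|\big|\le|DT|$ to replace $|Du|$ by $\ga|T|^{\ga-1}|DT|$, and then Hölder's inequality to split $\big\||T|^{\ga-1}|DT|\big\|$ into a factor $\norm{DT}_q$ times a power of a lower $L^s$-norm of $T$. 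The only non-routine point at this stage is the mean curvature term $\big\||H|\,|T|^\ga\big\|$ produced by Theorem~\ref{t-MS}: it is not a priori of lower order. This is exactly where the hypothesis enters. Since $H\in L^{n+\de}$ with $n+\de>m$ (strictly supercritical), Hölder gives $\big\||H|\,|T|^\ga\big\|\le\norm{H}_{n+\de}\,\big\||T|^\ga\big\|_{s}$ with $s$ strictly below the relevant critical exponent, and interpolating this $L^s$-norm between the critical norm and a lower norm, followed by Young's inequality, lets one absorb a small multiple of the top-order term into the left-hand side, leaving a constant depending only on $B$, $m$ and the exponents. The volume bound $\vol\le B$ is used in tandem to pass between the various $L^s$-norms of $T$ on a domain of controlled volume.

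Once the base inequality is available with uniform constant, the rest follows the standard procedure for the Gagliardo--Nirenberg inequality, with no new difficulty beyond bookkeeping. First, applying the base inequality to $D^jT$ and iterating climbs the Sobolev ladder, $W^{k,q}\hookrightarrow W^{k-1,q^*}\hookrightarrow\cdots$, the constant at each rung depending only on $B$, $m$ and the exponents; this yields the critical embeddings $\norm{D^jT}_p\le C\norm{T}_{k,q}$ whenever $1/p=j/m+1/q-k/m>0$. In the supercritical range $kq>m$ one iterates until the running exponent exceeds $m$ and then applies one further Michael--Simon step to $|T|^\ga$ with $\ga$ large (a Morrey-type estimate, still uniform because Theorem~\ref{t-MS} is metric-independent) to land in $L^\infty$, giving $\norm{T}_\infty\le C\norm{T}_{W^{k,q}}$. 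Finally, the genuine interpolation inequalities $(\ref{e:interpolation})$ with $a\in[j/k,1)$ are obtained from these endpoint estimates by the usual multiplicative trick: one combines them with Hölder's inequality among the relevant $L^p$-spaces, or equivalently applies the $a=1$ inequalities to rescaled tensors; the homogeneity encoded in $1/p=j/m+a(1/q-k/m)+(1-a)/r$ is precisely what makes this go through, and the constant again inherits dependence only on $B$, $m$ and the exponents.

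I expect the main obstacle to be the uniform absorption of the mean curvature term in the base case. Unlike the Euclidean Gagliardo--Nirenberg inequality, Michael--Simon forces the curvature back into the estimate and it is genuinely of the same order as $\norm{DT}$; the point of the hypothesis ``$\vol+\norm{H}_{n+\de}\le B$ with $\de>0$'' is that the exponent $n+\de$ is \emph{strictly} above $m$, so that Hölder plus interpolation buys back a strictly positive power of the top-order norm and absorption succeeds with a constant controlled purely by $B$. Checking that the exponents close up in every range of $q$ relative to $m$ — in the subcritical case, and again at each rung of the iteration where the running exponent crosses $m$ — is the delicate bookkeeping; everything else is the classical argument carried out verbatim, the only thing to monitor being that no constant ever sees the metric except through $B$.
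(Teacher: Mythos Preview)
Your proposal is correct and follows essentially the same route as the paper sketches: establish the base case $a=1$, $j=0$, $k=1$ (i.e.\ the uniform Sobolev inequality $\norm{T}_{q^*}\le C(\norm{DT}_q+\norm{T}_q)$) from the Michael--Simon inequality, using the supercritical bound on $H$ to absorb the curvature term, and then run the standard Gagliardo--Nirenberg bootstrap. The paper gives only a one-line indication of this strategy and defers the details to Mantegazza's original paper, so your write-up is in fact more explicit than what appears here; in particular your identification of the absorption step as the only non-routine point, and of the strict inequality $n+\de>m$ as what makes it close, is exactly right.
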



\subsection{Evolution equations}

For a small number $\vep\ge 0$, suppose $F:[0,T]\times \Si_0\to \Real^{n+2}$ is a solution to the perturbed SMCF
\begin{equation}\label{e-pSMCF}
  \p_t F=J_\vep H=\vep H+JH,
\end{equation}
where $J_\vep = \vep I + J$ and $H$ is the mean curvature vector.

First we recall some evolution equations under the perturbed SMCF, which are obtained in Song-Sun~\cite{SS-SMCF-2019}. Note that when $\vep=0$, they reduce to evolution equations of the SMCF.

\begin{lem}\label{l:evolution-equ}
Under the perturbed SMCF~(\ref{e-pSMCF}), we have the following evolution equations
\begin{eqnarray}
                  \p_t g &=& -2\<J_\vep H, A\>, \label{e:evolution-g}\\
                  \p_t d\mu &=& -\vep|H|^2d\mu, \label{e-evo-volume}\\
                  \p_t A &=& J_\vep \Delta A + A\#A\#A,\label{e:evolution-A}\\
                  \p_t H &=& J_\vep \Delta H + A\#A\#H,\label{e:evolution-H}\\
                  \p_t \n^lA&=&J_\vep\Delta\n^l A+\sum_{i+j+k=l}\n^iA\#\n^jA\#\n^lA. \label{e:evolution-dA}
\end{eqnarray}
where $g$ is the induced metric, $A$ is the second fundamental form, $d\mu$ is the induced volume form, $\n$ and $\Delta$ is the normal connection and corresponding Laplacian operator, and $\#$ denotes linear combinations of tensors.
\end{lem}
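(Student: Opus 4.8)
The plan is to derive these evolution equations by a direct computation, using the standard structure equations of a submanifold together with the defining flow equation $\p_t F = J_\vep H$. I would set up local coordinates $\{x^i\}$ on $\Si_0$ and work with the frame $\{\p_i F\}$ of the tangent bundle and an orthonormal frame of the normal bundle $N\Si$; all the objects $g_{ij}$, $A$, $H$, $J$ depend on $F$ and its derivatives, so each evolution equation comes from differentiating the corresponding geometric quantity in $t$ and commuting $\p_t$ past spatial derivatives.

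First I would establish \eqref{e:evolution-g}: since $g_{ij} = \langle \p_i F, \p_j F\rangle$, we get $\p_t g_{ij} = 2\langle \p_i\p_t F, \p_j F\rangle = 2\langle \p_i(J_\vep H), \p_j F\rangle$, and since $J_\vep H$ is normal, the tangential derivative picks up the second fundamental form, giving $-2\langle J_\vep H, A_{ij}\rangle$. The volume form equation \eqref{e-evo-volume} follows from $\p_t d\mu = \frac12 g^{ij}\p_t g_{ij}\, d\mu = -g^{ij}\langle J_\vep H, A_{ij}\rangle\, d\mu = -\langle J_\vep H, H\rangle\, d\mu$; here the key point is that $\langle JH, H\rangle = 0$ because $J$ rotates by $\pi/2$, so only the $\vep H$ part survives, yielding $-\vep|H|^2 d\mu$. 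For \eqref{e:evolution-A} I would compute $\p_t A_{ij} = \p_t(\nn_i\p_j F)$ (the normal part of $\p_i\p_j F$), commute $\p_t$ with the two spatial derivatives, substitute the flow equation, and organize the result: the leading term is $J_\vep$ applied to $\Delta A$ via the Simons-type identity relating $\Delta A$ to $\nabla^2 H$ plus curvature terms (which in Euclidean space are purely $A\#A\#A$), and all lower-order terms are algebraic contractions of $A$ with itself and with $\n A$; crucially, because we are in flat ambient space, no ambient curvature appears, and the schematic form $A\#A\#A$ absorbs everything. The equation \eqref{e:evolution-H} for $H = g^{ij}A_{ij}$ then follows by tracing \eqref{e:evolution-A}, picking up an extra $\p_t g^{ij} = -g^{ik}g^{jl}\p_t g_{kl}$ term which is again of the form $A\#A\#H$. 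Finally \eqref{e:evolution-dA} is obtained by induction on $l$: apply $\n$ to the equation for $\n^{l-1}A$, commute $\n$ past $\p_t$ and past $J_\vep\Delta$ (the commutators $[\n,\Delta]$ and $[\n, J_\vep]$ produce curvature-type terms which in Euclidean space are again products of $A$'s and their derivatives), and collect; the subscript on the last factor should read $\n^k A$ so that the indices sum correctly, and the resulting nonlinearity is the claimed sum over $i+j+k=l$ of $\n^i A\#\n^j A\#\n^k A$.

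The main obstacle is bookkeeping rather than any deep difficulty: one must carefully track how $\p_t$ interacts with the normal connection $\n$ (since $\n$ itself depends on $F$ through the normal bundle metric and the normal Christoffel symbols, $[\p_t,\n]$ is nonzero and contributes terms involving $\p_t$ of the connection coefficients, which unwind into $\nabla H \# A$ and similar), and how $\p_t J$ contributes (differentiating the complex structure produces first-derivative-of-$F$ terms, i.e.\ again $A$-type terms). The cleanest route is to first record the commutator formulas $[\p_t, \n_i] T = (\p_t \Ga) \# T$ with $\p_t\Ga$ expressed via \eqref{e:evolution-g}, and $\p_t J = J\#A$ schematically, and then feed these mechanically into the Simons identity. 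Since all of this is carried out in Song--Sun~\cite{SS-SMCF-2019} for $\vep = 0$ and the $\vep H$ contribution is just the standard mean curvature flow computation (which adds only the parabolic term $\vep\Delta$ and nonlinearities of the same schematic type), I would simply invoke that reference for the detailed verification and note that replacing $J$ by $J_\vep = \vep I + J$ changes nothing structurally — every $J$ in the $\vep=0$ computation becomes $J_\vep$, and the extra identity operator contributes the parabolic term plus lower-order terms already of the stated form.
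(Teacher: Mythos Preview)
Your proposal is correct and aligns with the paper's treatment: the paper does not give a proof here at all but simply cites Song--Sun~\cite{SS-SMCF-2019} for these evolution equations, exactly as you suggest doing at the end. Your sketch of the direct computation (differentiate $g_{ij}$, use $\langle JH,H\rangle=0$ for the volume form, Simons identity plus commutator bookkeeping for $A$ and its derivatives) is the standard derivation and is more detailed than what the paper provides.
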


The Gauss map of $F$ is a map $\rho:[0,T]\times(\Si_0,g)\to G$ into the Grassmannian manifold $G=G(n,2)$. Following Song~\cite{Song-Gauss-2017}, we can also deduce the evolution equation of Gauss map of the perturbed SMCF.
\begin{lem}\label{l:Gauss}
  Under the perturbed SMCF~(\ref{e-pSMCF}), the Gauss map $\rho$ satisfies
  \[ \p_t\rho = \J_\vep(\rho)\tau_g(\rho)=\vep \tau_g(\rho)+\J(\rho)\tau_g(\rho), \]
  where $\J$ is the complex structure on $G$ and $\tau_g(\rho)$ is the tension field of $\rho$.
\end{lem}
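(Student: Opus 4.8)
The plan is to reduce the statement to the Ruh--Vilms identity for the Gauss map together with the parallelism of the complex structure on the rank-two normal bundle, treating the drift term $\vep H$ separately. Recall that $\rho(t,x)\in G$ represents the oriented plane $F_*(T_x\Si_0)\subset\Real^{n+2}$ (equivalently its orthogonal complement $N_x\Si_0$), and that there is a canonical isometry $T_\rho G\cong\Hom(T_x\Si_0,N_x\Si_0)$ under which a tangent vector of $G$ is read off as the infinitesimal tilt of the plane. Two facts about this identification are standard, and I would recall them with the signs pinned down by the orientation rule in \eqref{e:SMCF}: the pull-back along $\rho$ of the Levi--Civita connection of $G$ corresponds to the connection on $\Hom(T\Si_0,N\Si_0)$ assembled from the Levi--Civita connection of $(\Si_0,g)$ and the normal connection $\n$; and the K\"ahler structure $\J$ of $G$ corresponds to post-composition with the fibrewise $\pi/2$-rotation $J$ of $N\Si_0$.

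With these in hand I would record the two first-variation formulas for $\rho$, working in a local $g$-orthonormal frame $\{e_i\}$ of $(\Si_0,g)$ chosen independent of $t$. The Gauss formula gives the spatial differential $d\rho(X)=A(X,\cdot)\in\Hom(T\Si_0,N\Si_0)$; taking the $g$-trace of its covariant derivative and using that $\n A$ is totally symmetric in flat ambient space (Codazzi) yields the Ruh--Vilms identity
\[ \tau_g(\rho)(Y)=\sum_i(\n_{e_i}A)(e_i,Y)=\sum_i(\n_YA)(e_i,e_i)=\n_YH,\qquad Y\in T\Si_0. \]
For the time derivative, observe that $\p_tF=J_\vep H=\vep H+JH$ is a \emph{normal} vector field along $F$, so no reparametrization term $A((\p_tF)^\top,\cdot)$ appears; differentiating, $\p_t(F_*e_i)=\on_{e_i}(\p_tF)=\on_{e_i}(J_\vep H)$, whose normal component is $\n_{e_i}(J_\vep H)$ by the Weingarten formula, so that in $\Hom(T\Si_0,N\Si_0)$
\[ \p_t\rho=\big(Y\longmapsto\n_Y(J_\vep H)\big). \]

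Finally I would combine the two. Since $\n J=0$ (the rotation $J$ on the oriented metric rank-two bundle $N\Si_0$ commutes with $\n$-parallel transport), one has $\n_Y(JH)=J\,\n_YH$, hence
\[ \p_t\rho(Y)=\vep\,\n_YH+J\,\n_YH=\vep\,\tau_g(\rho)(Y)+\big(\J(\rho)\,\tau_g(\rho)\big)(Y), \]
using Ruh--Vilms in the first summand and the identification of $\J$ with post-composition by $J$ in the second. This is exactly $\p_t\rho=\J_\vep(\rho)\tau_g(\rho)$; setting $\vep=0$ recovers the SMCF, while the coefficient $\vep$ in front of $\tau_g(\rho)$ records the contribution of the mean curvature flow part $\vep H$ of the perturbed flow.

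The only genuinely delicate point --- essentially the sole non-computational step --- is the bookkeeping in the identification $T_\rho G\cong\Hom(T\Si_0,N\Si_0)$: one must verify that $\J$ is post-composition by \emph{this} $J$ and not by $-J$, which is precisely where the orientation normalization of $J$ in \eqref{e:SMCF} enters, and that $\rho^*(\text{Levi--Civita of }G)$ really is the connection induced by $\n$, so that ``$\tau_g(\rho)$'' and ``$\n H$'' may legitimately be identified. All of this is carried out in \cite{Song-Gauss-2017}; the only new content here is that the perturbation $\vep H$ contributes $\vep\,\tau_g(\rho)$, again by Ruh--Vilms.
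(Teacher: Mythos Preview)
Your proposal is correct and matches the paper's approach: the paper does not supply a proof here but defers to \cite{Song-Gauss-2017}, and the argument you outline --- Ruh--Vilms $\tau_g(\rho)=\n H$ via Codazzi, identification of $\J$ with post-composition by $J$ on $N\Si$, and $\n J=0$ --- is precisely the content of that reference, with the $\vep H$ term handled by the same Ruh--Vilms identity.
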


Next we derive the evolution equation of the Sobolev norms of $A$. Since by integration by parts,
\[ \int \<\n^lA,J_\vep\Delta\n^l A\>=-\vep\int|\n^{l+1}A|^2,\]
it follows from (\ref{e:evolution-dA}) that
\begin{equation*}
\p_t\norm{\n^l A}_2^2\le C\sum_{i+j+k=l}\int_M|\n^i A|\cdot|\n^j A|\cdot|\n^k A|\cdot|\n^l A|d\mu.
\end{equation*}
Applying H\"older's inequality and the universal interpolation inequality in Theorem~\ref{t-universal} with $r=\infty$, we get
\begin{lem}
Under the perturbed SMCF~(\ref{e-pSMCF}),
\begin{equation*}
  \p_t\norm{\n^l A}_2^2\le C\norm{A}_\infty^2\norm{\n^l A}_2^2.
\end{equation*}
Hence
\begin{equation}\label{e-Awk2}
  \p_t\norm{A}_{k,2}^2\le C\norm{A}_\infty^2\norm{ A}_{k,2}^2.
\end{equation}
\end{lem}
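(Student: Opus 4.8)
The plan is to start from the integral inequality already displayed just above the statement,
\[ \p_t\norm{\n^l A}_2^2\le C\sum_{i+j+k=l}\int_M|\n^i A|\cdot|\n^j A|\cdot|\n^k A|\cdot|\n^l A|\,d\mu, \]
in which the parabolic term has been integrated by parts (contributing the favorable quantity $-\vep\int|\n^{l+1}A|^2\le 0$) and the lower order terms from differentiating the metric and the volume form — being of the same schematic type $\n^iA\#\n^jA\#\n^kA\#\n^lA$ with $i+j+k=l$, and in the volume case carrying the sign $-\vep\int|H|^2|\n^lA|^2\le0$ — are already absorbed into the right-hand side. It then suffices to bound each of the finitely many terms on the right by $C\norm{A}_\infty^2\norm{\n^lA}_2^2$. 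The bookkeeping identity driving this is that, whenever $i+j+k=l$,
\[ \frac{i}{2l}+\frac{j}{2l}+\frac{k}{2l}+\frac12=\frac{i+j+k}{2l}+\frac12=1, \]
so the exponents $2l/i$, $2l/j$, $2l/k$, $2$ are Hölder conjugate and
\[ \int_M|\n^i A|\,|\n^j A|\,|\n^k A|\,|\n^l A|\,d\mu\le\norm{\n^i A}_{2l/i}\,\norm{\n^j A}_{2l/j}\,\norm{\n^k A}_{2l/k}\,\norm{\n^l A}_2. \]

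Next I would apply Theorem~\ref{t-universal} with $r=\infty$ to each of the first three factors: for $0<m<l$ the exponent relation $\tfrac{l}{p}=\tfrac{m}{2}+\tfrac{l-m}{\infty}$ forces $p=2l/m$, hence
\[ \norm{\n^m A}_{2l/m}\le C\,\norm{\n^l A}_2^{m/l}\,\norm{A}_\infty^{1-m/l}, \]
with $C$ depending only on $n$ and $l$. For $m=0$ the factor is literally $\norm{A}_\infty$, and for $m=l$ (so the other two indices vanish) the term $\int|\n^lA|\,|A|\,|A|\,|\n^lA|\le\norm{A}_\infty^2\norm{\n^lA}_2^2$ directly. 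Substituting these interpolation bounds into the Hölder estimate, the exponents of $\norm{\n^lA}_2$ add up to $\tfrac{i+j+k}{l}+1=2$ and the exponents of $\norm{A}_\infty$ add up to $3-\tfrac{i+j+k}{l}=2$, so each term is $\le C\norm{A}_\infty^2\norm{\n^lA}_2^2$. Summing the finitely many terms gives $\p_t\norm{\n^lA}_2^2\le C\norm{A}_\infty^2\norm{\n^lA}_2^2$ (the case $l=0$ being immediate, since the nonlinearity in (\ref{e:evolution-dA}) is then just $A\#A\#A$ and $\int|A|^4\le\norm{A}_\infty^2\norm{A}_2^2$), and summing over $l=0,\dots,k$ yields $\p_t\norm{A}_{k,2}^2\le C\norm{A}_\infty^2\norm{A}_{k,2}^2$.

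There is no genuine obstacle at this step; the content is entirely the interpolation bookkeeping above. The only points that deserve care are: that the constant $C$ must be independent of the (evolving) induced metric and of $\vep$, which is exactly what Theorem~\ref{t-universal} provides when $r=\infty$ — so the uniform Mantegazza inequality (Theorem~\ref{t-lp}) is not yet needed here; the handling of the degenerate multi-indices where some of $i,j,k$ equal $0$ or $l$, treated as the limiting cases indicated; and the verification that differentiating $\norm{\n^lA}_2^2=\int_M\langle\n^lA,\n^lA\rangle_g\,d\mu$ produces only terms already of the form accounted for, which is what licenses passing from (\ref{e:evolution-dA}) to the displayed integral inequality.
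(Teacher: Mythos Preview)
Your proposal is correct and follows precisely the approach indicated in the paper: the paper states only that one should apply H\"older's inequality together with the universal interpolation inequality of Theorem~\ref{t-universal} with $r=\infty$, and your argument carries out exactly this computation in full detail, including the exponent bookkeeping and the handling of the degenerate indices.
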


We will also need the evolution equation of the $L^p$-norm of $H$.
\begin{lem}
Under the perturbed SMCF~(\ref{e-pSMCF}), for $p\ge 2$,
\begin{equation}\label{e-Alp}
  \p_t\norm{H}_p^2 \le C (\norm{\n H}_p^2+\norm{A}_\infty^2\norm{H}_p^2).
\end{equation}
\end{lem}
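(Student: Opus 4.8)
The plan is to derive the evolution of $\norm{H}_p^2$ directly from the evolution equation \eqref{e:evolution-H} for $H$, namely $\p_t H = J_\vep \Delta H + A\#A\#H$, taking care that the ambient norm $|H|^2$ is computed with respect to the time-dependent induced metric $g$ (though, since $H$ is a normal vector field in $\Real^{n+2}$ and we measure it with the fixed Euclidean metric, the only metric-dependence that enters is through the volume form $d\mu$ and through the connection). First I would write $\norm{H}_p^p = \int |H|^p \, d\mu$ and differentiate under the integral sign, producing three contributions: the term where $\p_t$ hits $|H|^p$ via $\p_t H$, and the term where $\p_t$ hits $d\mu$, which by \eqref{e-evo-volume} equals $-\vep \int |H|^{p}|H|^2 \, d\mu \le 0$ and can simply be discarded. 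So up to a favorable sign,
\[
\p_t \norm{H}_p^p \le C \int |H|^{p-2} \, \big\langle H, J_\vep \Delta H + A\#A\#H \big\rangle \, d\mu.
\]

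The next step is to handle the leading term $\int |H|^{p-2}\langle H, J_\vep \Delta H\rangle$ by integration by parts, moving one derivative off $\Delta H$. Writing $\n(|H|^{p-2}) = (p-2)|H|^{p-4}\langle H,\n H\rangle$ schematically, this produces a term like $-\int |H|^{p-2}\langle \n H, J_\vep \n H\rangle$ plus $-\int |H|^{p-4}\langle H,\n H\rangle \langle H, J_\vep \n H\rangle$. Because $J$ is a skew-adjoint parallel complex structure on the normal bundle, $\langle \n H, J\n H\rangle = 0$ pointwise, so the genuinely problematic antisymmetric pieces that would appear for the Schr\"odinger part actually vanish or reduce; the $\vep$-part $-\vep\int|H|^{p-2}|\n H|^2$ has a good sign. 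What survives after these cancellations is bounded, via Cauchy–Schwarz, by $C\int |H|^{p-2}|\n H|^2 \, d\mu$. For the zeroth-order term one simply has $\int |H|^{p-2}\langle H, A\#A\#H\rangle \le C\int |A|^2 |H|^p \, d\mu \le C\norm{A}_\infty^2 \norm{H}_p^p$.

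It then remains to convert the bound on $\p_t\norm{H}_p^p$ into the stated bound on $\p_t\norm{H}_p^2$. Since $\p_t\norm{H}_p^2 = \tfrac{2}{p}\norm{H}_p^{2-p}\p_t\norm{H}_p^p$, we get
\[
\p_t\norm{H}_p^2 \le C\norm{H}_p^{2-p}\Big( \int |H|^{p-2}|\n H|^2 \, d\mu + \norm{A}_\infty^2\norm{H}_p^p\Big).
\]
The second piece is exactly $C\norm{A}_\infty^2\norm{H}_p^2$. For the first piece, apply H\"older's inequality with exponents $\tfrac{p}{p-2}$ and $\tfrac{p}{2}$ to $\int |H|^{p-2}|\n H|^2$, obtaining $\norm{H}_p^{p-2}\norm{\n H}_p^2$; multiplying by the prefactor $\norm{H}_p^{2-p}$ leaves precisely $C\norm{\n H}_p^2$. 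Combining the two gives \eqref{e-Alp}.

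The main obstacle, and the point deserving the most care, is the integration-by-parts step on the $\langle H, J_\vep\Delta H\rangle$ term: one must check that the would-be top-order term $\int|H|^{p-2}|\n H|^2$ coming with an imaginary/antisymmetric coefficient does not destroy the estimate, i.e.\ that the skew-symmetry of $J$ together with $\n J = 0$ makes the dangerous contribution either vanish pointwise or combine into a controllable real quantity. A secondary subtlety is the mild non-smoothness of $|H|^{p-2}$ where $H=0$ when $p<4$; this is handled in the standard way by replacing $|H|^2$ with $|H|^2+\sigma$, carrying out the computation, and letting $\sigma\to 0$. Everything else is routine H\"older and interpolation bookkeeping.
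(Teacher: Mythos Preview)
Your proposal is correct and follows essentially the same route as the paper: differentiate $\norm{H}_p^p$, insert the evolution equation \eqref{e:evolution-H}, integrate the $J_\vep\Delta H$ term by parts, use skew-symmetry of $J$ to kill $\langle \n H, J\n H\rangle$ and Cauchy--Schwarz to control the cross term $|H|^{p-4}\langle H,\n H\rangle\langle H,J\n H\rangle$, then apply H\"older with exponents $p/(p-2)$ and $p/2$. Your additional remarks on the sign of the volume-form contribution and on regularizing $|H|^{p-2}$ near zeros of $H$ are points the paper passes over silently, but they do not change the argument.
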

\begin{proof}
By direct computation,
\begin{equation*}
  \begin{aligned}
    \p_t\norm{H}_p^p &= \int p|H|^{p-2}\<H,\n_tH\>\\
    &=p\int  |H|^{p-2}\<H,J_\ep \Delta H+A\#A\#H\>\\
    &=-p(p-2)\int |H|^{p-4}\<H,\n H\>\<H,J_\ep\n H\>-p\int |H|^{p-2}\<\n H,J_\ep \n H\>\\
    &\quad\quad +p\int  |H|^{p-2}\<H, A\#A\#H\>\\
    &\le p(p-2)\int |\n H|^2|H|^{p-2}+p\int|A|^2|H|^{p}\\
    &\le C (\norm{\n H}_p^2\norm{H}_p^{p-2}+\norm{A}_\infty^2\norm{H}_p^p).
  \end{aligned}
\end{equation*}
Thus the lemma follows.
\end{proof}

\begin{rem}
Up to now, we have only used the universal interpolation inequality in Theorem~\ref{t-universal}, thus all the constants are uniform.
\end{rem}

\subsection{Proof of existence}

Let
\[ k_0=[\frac{n}{2}]+1, \quad p_0=\frac{2n}{n-2k_0+2}.\]
Note that $k_0>n/2$, thus we have Sobolev embedding $W^{k_0,2}\hookrightarrow L^\infty$. Moreover $p_0>n$ and we have Sobolev embedding $W^{k_0,2}\hookrightarrow W^{1,p}$ for all $n<p<p_0$. (Actually, $p_0=2n$ when $n$ is odd, and $p_0=+\infty$ when $n$ is even.)

Now let $p\in (n, p_0)$ be a fixed real number. For the perturbed SMCF with $\vep>0$, we define the energy
\[\E_k(F_\vep):=\vol+\norm{H}_p^2+\norm{A}_{k,2}^2,\]
where $\vol$ is the volume and $k\ge k_0$ is an integer.

First we assume that $\norm{H}_p$ is uniformly bounded along the perturbed SMCF for some time span $[0,T_\vep]$.
Recall that by (\ref{e-evo-volume}), the volume is decreasing along the perturbed SMCF. Thus there exists a uniform constant $B>0$ such that $\vol+\norm{H}_p\le B$ for $t\in [0,T_\vep']$.

Then we can apply Theorem~\ref{t-lp} to get
\[ \norm{A}_\infty +\norm{\n A}_p\le C(B) \norm{A}_{k_0,2}.\]
It follows immediately from inequality (\ref{e-Awk2}) that
\begin{equation*}
  \p_t\norm{A}_{k,2}^2\le C(B)\norm{ A}_{k_0,2}^2\norm{ A}_{k,2}^2. \label{e-1}
\end{equation*}
Similarly, by (\ref{e-Alp}) we have
\begin{equation*}
\p_t\norm{H}_p^2 \le C (B)\norm{A}_{k_0,2}^2(1+\norm{H}_p^2).
\end{equation*}

Therefore, we conclude that

\begin{lem}\label{l:estimate-Ek}
Suppose there is a constant $B>0$, such that $\vol+\norm{H}_p\le B$ along the perturbed SMCF for a time span $[0,T_\vep']$, then for any $k\ge k_0$,
\begin{equation}\label{e:estimate-Ek}
  \p_t\E_k\le C(B)\norm{A}_{k_0,2}^2\cdot (1+\E_k).
\end{equation}
\end{lem}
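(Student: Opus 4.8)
\textbf{Proof proposal for Lemma~\ref{l:estimate-Ek}.}
The plan is to assemble the three evolution inequalities already in hand — the monotonicity of volume, the $L^p$-estimate \eqref{e-Alp} for $H$, and the $H^{k,2}$-estimate \eqref{e-Awk2} for $A$ — and to close them using the uniform Gagliardo-Nirenberg inequality of Theorem~\ref{t-lp}, which is available precisely because the hypothesis $\vol+\norm{H}_p\le B$ holds on $[0,T_\vep']$ with $p=n+\de>n$. The point is that every Sobolev constant produced below depends only on $B$, $n$ and the fixed exponents, never on $\vep$ or on the evolving metric.

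First I would differentiate $\E_k=\vol+\norm{H}_p^2+\norm{A}_{k,2}^2$ term by term. The volume term contributes $-\vep\norm{H}_2^2\le 0$ by \eqref{e-evo-volume}, so it may simply be dropped. For the $A$-term, \eqref{e-Awk2} gives $\p_t\norm{A}_{k,2}^2\le C\norm{A}_\infty^2\norm{A}_{k,2}^2$, and Theorem~\ref{t-lp} applied with $kq>m=n$ (here $k=k_0>n/2$, $q=2$) yields $\norm{A}_\infty\le C(B)\norm{A}_{k_0,2}$; hence $\p_t\norm{A}_{k,2}^2\le C(B)\norm{A}_{k_0,2}^2\norm{A}_{k,2}^2$. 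For the $H$-term, \eqref{e-Alp} reads $\p_t\norm{H}_p^2\le C(\norm{\n H}_p^2+\norm{A}_\infty^2\norm{H}_p^2)$. The term $\norm{A}_\infty^2\norm{H}_p^2$ is handled as before. For $\norm{\n H}_p$ I would use that $\n H$ is, up to a $\#$-contraction, controlled by $\n A$ (since $H=\tr_g A$ and the trace commutes with $\n$), so $\norm{\n H}_p\le C\norm{\n A}_p$; then Theorem~\ref{t-lp} with $j=1$, $k=k_0$, $q=2$, and the target exponent $p\in(n,p_0)$ — which is exactly the range for which $W^{k_0,2}\hookrightarrow W^{1,p}$ — gives $\norm{\n A}_p\le C(B)\norm{A}_{k_0,2}$. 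Adding the three pieces produces $\p_t\E_k\le C(B)\norm{A}_{k_0,2}^2(1+\norm{H}_p^2+\norm{A}_{k,2}^2)\le C(B)\norm{A}_{k_0,2}^2(1+\E_k)$, which is \eqref{e:estimate-Ek}.

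The main technical point — not really an obstacle, but the step that has to be done carefully — is checking that the exponents in the two invocations of Theorem~\ref{t-lp} are admissible: for $\norm{\n A}_p$ one needs $\tfrac1p=\tfrac1n+a(\tfrac12-\tfrac{k_0}{n})+\tfrac{1-a}r>0$ to be solvable with $a\in[1/k_0,1]$ and $r$ finite, which holds exactly because $p<p_0=\tfrac{2n}{n-2k_0+2}$ and $k_0=[\tfrac n2]+1>\tfrac n2$; and for $\norm{A}_\infty$ one needs $k_0 q = 2k_0>n$, again guaranteed by the choice of $k_0$. Everything else — the product rule for $\p_t\E_k$, the identification of $\n H$ with a contraction of $\n A$, absorbing lower-order factors of $\norm{H}_p$ into $1+\E_k$ — is routine and uses only the universal interpolation inequality of Theorem~\ref{t-universal} together with the estimates \eqref{e-Awk2} and \eqref{e-Alp} already derived.
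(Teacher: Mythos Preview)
Your proposal is correct and follows essentially the same route as the paper: drop the nonpositive volume term, apply Theorem~\ref{t-lp} under the hypothesis $\vol+\norm{H}_p\le B$ to get $\norm{A}_\infty+\norm{\n A}_p\le C(B)\norm{A}_{k_0,2}$, and feed these bounds into \eqref{e-Awk2} and \eqref{e-Alp}. Your additional check of the admissible exponent ranges for the two invocations of Theorem~\ref{t-lp} makes explicit what the paper leaves implicit.
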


The following lemma shows that there is a uniform lower bound for the time $T_\vep'$ for $\vep>0$.

\begin{lem}\label{l-uniform}
Given an initial immersion $F_0:M\to \Real^{m+2}$, there exists a uniform time $T_0>0$ only depending on $\E_0:=\E_{k_0}(F_0)$ such that
\[\E_{k_0}(t):=\E_{k_0}(F_\vep(t)) \le 2\E_0+1, \forall \vep>0, t\in [0, T_0].\]
\end{lem}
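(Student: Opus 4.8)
The plan is to run a continuity (bootstrap) argument on the scalar quantity $\E_{k_0}(t)$, using the differential inequality from Lemma~\ref{l:estimate-Ek} together with the fact that the hypothesis $\vol+\norm{H}_p\le B$ of that lemma is self-improving on a short time interval. First I would fix the initial data and set $\E_0=\E_{k_0}(F_0)$, and introduce the open set
\[ I_\vep=\{t\in[0,T_\vep)\ :\ \E_{k_0}(s)<2\E_0+1\ \text{for all}\ s\in[0,t]\}. \]
Since $\E_{k_0}$ is continuous in $t$ (the solution $F_\vep$ is smooth) and $\E_{k_0}(0)=\E_0<2\E_0+1$, the set $I_\vep$ contains a neighbourhood of $0$; let $T^*_\vep=\sup I_\vep$. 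On $[0,T^*_\vep)$ we have $\norm{H}_p^2\le\E_{k_0}\le 2\E_0+1$ and, by (\ref{e-evo-volume}), $\vol\le\vol(F_0)\le\E_0$, so $\vol+\norm{H}_p\le B:=\E_0+\sqrt{2\E_0+1}$, a constant depending only on $\E_0$. Hence Lemma~\ref{l:estimate-Ek} applies with this $B$ on $[0,T^*_\vep)$, giving
\[ \p_t\E_{k_0}\le C(B)\,\norm{A}_{k_0,2}^2\,(1+\E_{k_0})\le C(B)\,(1+\E_{k_0})^2, \]
where I used $\norm{A}_{k_0,2}^2\le\E_{k_0}\le 1+\E_{k_0}$.

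Next I would integrate this scalar ODE inequality. Writing $y(t)=1+\E_{k_0}(t)$ we get $y'\le C(B)y^2$, hence $y(t)\le y(0)/(1-C(B)y(0)t)$ as long as the denominator is positive. Choosing
\[ T_0=\frac{1}{2C(B)(1+\E_0)} \]
(which depends only on $\E_0$), we obtain for $t\le\min\{T_0,T^*_\vep\}$ the bound $y(t)\le 2y(0)=2(1+\E_0)$, i.e. $\E_{k_0}(t)\le 2\E_0+1$. This is the crucial strict improvement: on $[0,\min\{T_0,T^*_\vep\})$ we have shown $\E_{k_0}(t)\le 2\E_0+1$, which forbids $\E_{k_0}$ from reaching the threshold $2\E_0+1$ before time $T_0$. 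A standard open–closed argument then forces $T^*_\vep\ge\min\{T_0,T_\vep\}$, and in particular $\min\{T_0,T_\vep\}$ is an admissible time; one also concludes $T_\vep\ge T_0$, since the energy bound together with the smoothing of the perturbed (parabolic) flow precludes the solution from ceasing to exist while $\E_{k_0}$, and hence (via Theorem~\ref{t-lp}) $\norm{A}_\infty$ and all higher norms, stay bounded. Thus $\E_{k_0}(t)\le 2\E_0+1$ for all $t\in[0,T_0]$ and all $\vep>0$, with $T_0$ depending only on $\E_0$, as claimed.

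I expect the only genuinely delicate point to be the last step — upgrading the a priori bound into an actual lower bound on the existence time $T_\vep$. For that one needs a continuation criterion for the perturbed SMCF (\ref{e-pSMCF}): as long as $\E_{k}(F_\vep(t))$ stays finite for every $k$, the solution extends past time $t$. Since (\ref{e:estimate-Ek}) propagates a bound on every $\E_k$ from a bound on $\E_{k_0}$ (by Grönwall, once $\norm{A}_{k_0,2}$ is controlled), the bootstrap on $\E_{k_0}$ indeed controls all higher energies on $[0,T_0]$, and the standard parabolic continuation argument for (\ref{e-pSMCF}) then gives $T_\vep\ge T_0$. Everything else is the routine scalar ODE comparison and the continuity of $t\mapsto\E_{k_0}(t)$ already guaranteed by smoothness of $F_\vep$; the choice of $k_0>n/2$ is what makes $\norm{A}_\infty\lesssim\norm{A}_{k_0,2}$ available through Theorem~\ref{t-lp}, which is the structural reason the argument closes at the level of $\E_{k_0}$ alone.
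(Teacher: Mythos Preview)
Your argument is correct and follows essentially the same bootstrap/continuity scheme as the paper's proof. The only minor variation is that the paper, once on the bootstrap interval, bounds $\norm{A}_{k_0,2}^2$ by the fixed constant $B-1$ to obtain the \emph{linear} Gronwall inequality $\p_t(1+\E_{k_0})\le \bar C(B)(1+\E_{k_0})$ and then reads off $T_0=\ln 2/\bar C(B)$ from $e^{\bar C(B)T_\vep'}\ge 2$, whereas you keep $\norm{A}_{k_0,2}^2\le 1+\E_{k_0}$ and integrate the quadratic ODE $y'\le C(B)y^2$; both choices are routine and yield a $T_0$ depending only on $\E_0$. Your treatment of the continuation step (propagating the $\E_{k_0}$ bound to all $\E_k$ via (\ref{e:estimate-Ek}) and then invoking parabolic extension for (\ref{e-pSMCF})) is in fact slightly more explicit than the paper's one-line remark.
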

\begin{proof}
Set $B=2 (1+\E_0)$. For each $\vep>0$, define
\[T_\vep':=\sup\{T\in[0,T_\vep]|1+\E_{k_0}(t)\le B, \forall t\in [0,T]\}. \]
Clearly, $T_\vep'>0$ since the solution $F_\vep$ is smooth. Moreover, $T_\vep'$ is smaller than the maximal existence time $T_\vep$. Otherwise we can find a limit of $F_\vep(t)$ as $t\to T_\vep$ and the solution can be extended past the maximal time $T_\vep$.

Applying Lemma~\ref{l:estimate-Ek}, we get that for $t\in [0,T_\vep']$
\[ \p_t(1+\E_{k_0})\le C(B)\norm{A}_{k_0,2}^2(1+\E_{k_0})\le\bar{C}(B)(1+\E_{k_0}),\]
where $\bar{C}(B)=C(B)B$ is a constant only depending on $B$.

It follows from Gronwall's inequality that
\[ 1+\E_{k_0}(t)\le e^{\bar{C}(B)t}(1+\E_0).\]
Now by letting $t=T_\vep'$, we find $2 \le e^{\bar{C}(B)T_\vep'}$, yielding
\[ T_\vep' \ge T_0:=\frac{\ln 2}{\bar{C}(B)}.\]
\end{proof}

With the above uniform estimates, we are now in the position to prove the existence part of Theorem~\ref{t:main-existence}, which we restate as follows.

\begin{thm}\label{t:existence}
Suppose $\Si_0$ is a compact $n$ dimensional manifold and $F_0$ is a smooth immersion, then the initial value problem (\ref{e:initial-value-problem}) has a smooth solution $F:[0,T)\times \Si_0\to \Real^{n+2}$, where $T$ only depends on the energy $\E_0=\E_{k_0}(F_0), k_0=[n/2]+1$.

Moreover, there exists a constant $C_k$ only depending on $k$ and $\E_0$ such that for all $t\in [0,T)$ ,
\begin{equation}\label{e:bound-SSF1}
  \E_k(F(t))\le C_k \E_k(F_0), \forall k\ge k_0.
\end{equation}
\end{thm}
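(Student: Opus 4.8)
The plan is to realize the solution to the SMCF as a limit of the parabolic perturbations $F_\vep$ solving \eqref{e-pSMCF}, using the uniform estimates already assembled. The scheme has three stages: (i) upgrade the uniform $\E_{k_0}$-bound of Lemma~\ref{l-uniform} to a uniform $\E_k$-bound for every $k\ge k_0$ on a common time interval $[0,T_0]$; (ii) extract a convergent subsequence $F_\vep\to F$ and verify $F$ solves \eqref{e:initial-value-problem}; (iii) show $F$ is smooth and that the bound \eqref{e:bound-SSF1} passes to the limit. The only genuinely delicate part is (ii): since the parabolic term in \eqref{e-pSMCF} degenerates as $\vep\to 0$, one gets \emph{no} $\vep$-independent control on $\p_t F_\vep$ beyond what the equation itself provides, so the compactness must be squeezed out of spatial regularity plus the structure of the flow.

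For stage (i), fix $k\ge k_0$ and work on $[0,T_0]$ where Lemma~\ref{l-uniform} gives $\vol+\norm{H}_p\le B:=2(1+\E_0)$ uniformly in $\vep$. Then Lemma~\ref{l:estimate-Ek} applies: $\p_t\E_k\le C(B)\norm{A}_{k_0,2}^2(1+\E_k)\le \bar C(B)(1+\E_k)$, since $\norm{A}_{k_0,2}^2\le \E_{k_0}\le B$. Gronwall gives $1+\E_k(t)\le e^{\bar C(B)t}(1+\E_k(0))$, i.e. $\E_k(F_\vep(t))\le C_k\,\E_k(F_0)$ with $C_k=C_k(k,\E_0)$ — already essentially \eqref{e:bound-SSF1}, uniform in $\vep$. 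This in turn, via Theorem~\ref{t-lp} and the equivalence with Gauss-map Sobolev norms (Remark, part 2 / Theorem~\ref{c:equivalence}), bounds $\norm{A}_{k,\infty}$ and all spatial derivatives of the Gauss map uniformly; pairing this with the equation $\p_tF_\vep=\vep H+JH$ converts spatial bounds into mixed space-time bounds on $F_\vep$ (the worst term $\vep H$ is harmless because $|\vep H|\le \vep\norm{A}_\infty\to 0$, while $JH=J\Delta_g F$ is controlled by $\norm{A}_{k,\infty}$). One must also pin down the immersions in $\Real^{n+2}$ rather than just their geometric data: since $\p_tF_\vep = JH+\vep H$ is uniformly bounded in $C^0$, the maps $F_\vep(t,\cdot)$ stay in a fixed ball and are uniformly Lipschitz in $t$, and combined with the uniform spatial $W^{k+1,2}$-bounds (the ``$W^{k+1,2}$, not $W^{k+2,2}$'' loss noted in the fourth Remark, coming from the reparametrization freedom) we get equicontinuity in all variables.

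For stage (ii), apply Arzelà–Ascoli / Banach–Alaoglu: pass to a subsequence $\vep_i\to 0$ with $F_{\vep_i}\to F$ in $C^0_{t,x}$ and weakly-$*$ in $L^\infty_t W^{k+1,2}_x$ for each $k$, hence (by Sobolev embedding and interpolation in the compact setting) strongly in $C^0_t C^{k}_x$ for every $k$. Strong $C^2$-convergence is enough to pass to the limit in every term of $\p_tF_{\vep_i}=\vep_i H(F_{\vep_i})+J(F_{\vep_i})H(F_{\vep_i})$: the induced metric $g_{\vep_i}$, Christoffel symbols, second fundamental form, and complex structure $J(F_{\vep_i})$ all converge to those of $F$, the $\vep_i H$ term vanishes, and one concludes $\p_tF=J(F)H(F)$ in the distributional sense, with $F(0,\cdot)=F_0$ from the $C^0$-convergence. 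That $F$ is an immersion for all $t\in[0,T_0)$ follows because $g_{\vep_i}\to g$ in $C^0$ and the $g_{\vep_i}$ are uniformly bounded below (the metric evolution $\p_tg=-2\langle J_\vep H,A\rangle$ together with the uniform $\norm{A}_\infty$-bound keeps $g$ comparable to $g_0$ on $[0,T_0]$).

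Finally, for smoothness: the limit $F$ lies in $L^\infty([0,T_0),W^{k+1,2})$ for \emph{all} $k$, so it is smooth in the spatial variables for each $t$; bootstrapping in time is then standard — differentiate the now-genuinely-parabolic-free equation $\p_tF=J(F)\Delta_g F$, use the evolution equations \eqref{e:evolution-A}--\eqref{e:evolution-dA} with $\vep=0$ (all of which hold for $F$ by the limiting argument, or can be re-derived directly) to get $\p_t\n^lA\in L^\infty$, hence higher time-regularity, and iterate. Passing \eqref{e:bound-SSF1} to the limit is immediate from weak-$*$ lower semicontinuity of the Sobolev norms and continuity of $\vol$ and $\norm{H}_p$ under $C^2$-convergence. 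I expect the main obstacle to be organizing stage~(ii) cleanly: specifically, confirming that the $\vep$-independent bounds really do give enough compactness (one cannot bound $\p_t^2F_\vep$ uniformly, so the time-compactness is only $C^{0,1}_t$, and one must be careful that $C^{0,1}_t C^\infty_x$-convergence suffices to identify the limit as a classical, not merely distributional, solution — which it does, since the equation is first order in $t$).
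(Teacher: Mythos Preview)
Your proposal is correct and follows essentially the same approach as the paper: parabolic regularization, the uniform $\E_{k_0}$-bound from Lemma~\ref{l-uniform}, Gronwall on~\eqref{e:estimate-Ek} to propagate to all $\E_k$, and then compactness to pass to the limit. The paper compresses your stages~(ii)--(iii) into a single sentence (``by standard arguments, we can extract a subsequence $\vep_i\to 0$ such that $F_{\vep_i}$ converges smoothly''), so your more careful discussion of the compactness---$C^{0,1}_t$ in time from the uniform bound on $\p_tF_\vep=J_\vep H$, $C^\infty_x$ in space from the $\E_k$-bounds for all $k$, and strong $C^2$-convergence sufficing to pass to the limit in every term---simply spells out what the paper leaves implicit.
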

\begin{proof}
For any small $\vep>0$,  by applying DeTurck's trick, we can find a local solution $F_\vep:[0,T_\ep]\times M\to \Real^{m+2}$ to the perturbed SMCF (\ref{e-pSMCF}). (cf. Lemma 4.1 in \cite{SS-SMCF-2019})

By Lemma~\ref{l-uniform}, there exists a time $T_0$ only depending on $\E_0$ such that $T_\vep>T_0$ and
\begin{equation}\label{e:1}
  \E_{k_0}(F_\vep(t))\le 2\E_0+1, \forall \vep>0, t\in [0,T_0].
\end{equation}
Then by (\ref{e:estimate-Ek}) in Lemma~\ref{l:estimate-Ek} that for all $\vep>0$, we have uniform inequality
\begin{equation*}
  \p_t\E_k(F_\vep(t))\le C(\E_0)(1+\E_k(F_\vep(t))),
\end{equation*}
which implies
\begin{equation}\label{e:bound-Ek}
  \E_k(F_\vep(t))\le C(\E_0)\E_k(F_0)).
\end{equation}
In particular, we have
\begin{equation}\label{e:2}
  \norm{A(F_\vep(t))}_{k,2}\le C_k(\E_0)\norm{A(F_0)}_{k,2}, \quad \forall k\ge k_0.
\end{equation}

Since $\vol+\norm{H}_p$ of each submanifold $F_\vep(t)$ are uniformly bounded by (\ref{e:1}), we have uniform Sobolev inequalities guaranteed by Theorem~\ref{t-lp}. Thus it follows from (\ref{e:2}) that the second fundamental form $ A(F_\vep(t))$ and its derivatives are all uniform bounded.

Now by standard arguments, we can extract a subsequence $\vep_i\to 0$ such that $F_{\vep_i}$ converges smoothly to a limit $F_\infty:[0,T_0]\times M\to \Real^{m+2}$ which is the desired solution to the SMCF. Finally, the estimate (\ref{e:bound-SSF1}) is a direct consequence of (\ref{e:bound-Ek}).
\end{proof}

\begin{rem}
By Theorem~\ref{c:equivalence} in the appendix, the energy $\E_k$ is equivalent to the energy $\bar{\E}_k=\norm{\rho}_{W^{k+1,2}}$ of the Gauss map. The latter energy is used by Mantegazza~\cite{Mantegazza-GAFA-2002} to investigate a higher order analog of the mean curvature flow.
\end{rem}


\section{Uniqueness of SMCF}\label{s:uniqueness}

\subsection{Idea of proof}\label{s:idea-of-proof-uniqueness}

The proof of uniqueness of the SMCF is more challenging than that of existence. In fact, it is well-known that even for a (complex-valued) scalar \sch equation that contains 1st order derivatives (also referred as derivative \sch equations), the uniqueness can only be obtained under certain restrictions on the nonlinear structure (cf. \cite{LP-book-2015}). Compared to parabolic equations, this is because the leading 2nd order term in a \sch equation does not play a dominant role and provide a ``good" term which help to control the lower-order terms. Nevertheless, by fully exploring the underlying geometric structures of the SMCF, we show that the uniqueness still holds true, provided the solution is sufficiently smooth.

Suppose $F$ and $\t{F}$ are two solutions to the initial value problem (\ref{e:initial-value-problem}) with the same initial data $F_0$, we will show that the two solutions are identical by an energy method. Thus we need to define an energy functional $\L$ which describes the difference/distance of $F$ and $\t{F}$ such that $\L$ vanishes if and only if $F=\t{F}$. The main problem then is to find a suitable quantity $\L$.

One naive option is to take the distance of two submanifolds $\bar{d}(F,\t{F})$ and its derivatives, where $\bar{d}$ is the extrinsic distance in the ambient space $\bar{M}$. In our setting where $\bar{M}=\Real^{n+2}$, one may simply let $\L=\norm{F-\t{F}}_{k,2}$ for some $k\in \mathbb{N}$. But the evolution equations of derivatives of $F$ and $\t{F}$ are quite messy, because they are not geometric quantities (tensors) of corresponding submanifolds. Thus this method certainly won't work and we need to find more geometric quantities instead.

The most important geometric quantity of a submanifold is the second fundamental form. Thus one may consider the difference of the second fundamental forms, such as $\norm{A-\t{A}}_2$. Note that $A$ is a section of the bundle $\N\otimes (T^*\Si)^2$ while $\t{A}$ is a section of the bundle $\t\N\otimes (T^*\t\Si)^2$, where $\N$ and $\t\N$ are the normal bundle of $\Si=(\Si_0, g)$ and $\t\Si=(\Si_0, \t{g})$ respectively. Thus by defining the substraction $A-\t{A}$, we implicitly use the embedding $\N\subset F^*\Real^{n+2}, \t\N\subset \t{F}^*\Real^{n+2}$ and the distance of the ambient space $\Real^{n+2}$. Moreover, we directly identity $T^*\Si$ with $T^*\t\Si$ although their metrics are different.

However, by the evolution equation (\ref{e:evolution-A}), we have
\[ \p_t(A-\t{A})=J\Delta A-\t{J}\t\Delta \t{A}+A^3-\t{A}^3.\]
The leading order terms on the right hand side will cause serious technical issues. First of all, it won't make sense if we write
\[ J\Delta A-\t{J}\t\Delta \t{A}=J(\Delta A-\t\Delta \t{A})+(J-\t{J})\t\Delta \t{A}, \]
because the complex structure $J$ and $\t{J}$ are only defined on corresponding normal bundles. Even if we insist to do so by extending the definition of $J$ and $\t{J}$, some first order terms will emerge from the term $\Delta A-\t\Delta \t{A}$, which seems hard to control. Moreover, it is useless to add higher order terms into the energy $\L$, say $\norm{\n A-\tn\t{A}}_2$, because yet an even higher order term will appear and the issue remains unsolved.

From the above discussion, one can see that the difficulty of proving uniqueness is caused by the higher order terms which appear while we take the difference of tensors defined on two different submanifolds. Our solution is to find a more geometric way to measure the difference of two submanifolds. In particular, we will define bundle isomorphisms between tensor bundles on difference submanifolds, to eliminate the extra terms appearing during the substraction. Our key idea comes from the observation that the Gauss map of the SMCF satisfies a \sch map flow~\cite{Song-Gauss-2017} and our investigation on uniqueness of \sch map flows~\cite{SW-uniqueness-2018}.

Given two solutions $F,\t{F}$ of the SMCF, there are two Gauss maps $\rho,\t\rho$ which satisfies a \sch map flow (with time-dependent metrics on the domain manifold) in the Grassmannian $G$. Then following \cite{SW-uniqueness-2018}, we define the intrinsic distance of $d\rho$ and $d\t\rho$ by the parallel transport $\P$ on the Grassmannian manifold. But $d\rho$ and $d\t\rho$ can be identified with the second fundamental forms $A,\t{A}$ respectively. Therefore, we are led to define the intrinsic distance of $A$ and $\t{A}$ by
\[  d(A,\t{A})=d(d\rho, d\t\rho)=|d\rho-\P(d\t\rho)|.\]
In other words, the parallel transport map $\P$ induces a bundle isomorphism between $\rho^*TG=\N\otimes T^*\Si$ and $\t\rho^*TG=\t\N\otimes T^*\t\Si$, such that $J\circ \P=\P\circ \t{J}$. This solves the issue brought by the extrinsic method above. Namely, now we have
\[ J\Delta A-\P(\t{J}\t\Delta\t{A})=J(\Delta A-\P(\t\Delta\t{A})).\]

Actually in our proof, we go one step further and define a bundle isometry $\R_s:\t\N\otimes (T^*\t\Si)^s\to \N\otimes (T^*\Si)^s$ for any $s\in \mathbb{N}$. Then we consider the energy
\[ \L_1(F,\t{F})=\norm{d(\rho,\t\rho)}^2+\norm{A-\R_2(\t{A})}^2+\norm{\n A-\R_3(\tn\t{A})}^2.\]
It turns out that using $\R_s$ instead of $\P$ can help us further reduce the requirements on the regularity of the solutions.

On the other hand, we also need to estimate the difference of the two Laplacian operatiors $\Delta$ and $\t\Delta$. Since the underlying metrics $g$ and $\t{g}$ of the two solutions are different,  we shall encode the difference of the metrics and connections into the total energy by letting
\[ \L_2(F,\t{F})=\norm{g-\t{g}}^2+\norm{\Ga-\t{\Ga}}^2.\]

Moreover, because $\L_1$ is defined via the parallel transport while $\L_2$ is defined by directly identifying $T\Si$ with $T\t\Si$, we also need to fill the gap between $\L_1$ and $\L_2$. This is accomplished by adding a term
\[ \L_3(F,\t{F})=\norm{I-\Q}^2, \]
where $\Q: T\t\Si\to T\Si$ is the isometry induced by $\P$ and $I: T\t\Si\to T\Si$ is the identity map.

Finally, we define the total energy functional by $\L=\L_1+\L_2+\L_3$. Then the uniqueness of SMCF follows from a Gronwall type inequality of $\L$.

\begin{rem}
In \cite{CY-Uniqueness-2007} and \cite{LM-uniqueness-2017}, the authors proved the uniqueness of the Mean Curvature Flow(MCF) by using the parallel transport $\P_{\b{M}}$ in the ambient space $\bar{M}$. It should by pointed out that our parallel transport $\P_G$ in the Grassmannian manifold is more sophisticated than $\P_{\b{M}}$. Indeed, in our setting where $\b{M}=\Real^{n+2}$ is the Euclidean space, the parallel transport $P_{\b{M}}$ is simply the identity map, while $\P_G$ is highly non-trivial. More precisely, $P_{\b{M}}$ provides an isomorphism between the pull-back bundles $F^*T\b{M}$ and $\t{F}^*T\b{M}$, but $\P_G$ establishes an isomorphism between both the normal bundles $\N, \t\N$ and the tangent bundles $T\Si, T\t\Si$.
\end{rem}

\subsection{Parallel transport and distance of tensors }\label{s:intrinsic-distance}

\subsubsection{Intrinsic distance of vectors}

Suppose $M$ is a Riemannian manifold. Let $x,y\in M$ be two distinct points and $X\in T_xM$, $Y\in T_yM$ be two tangent vectors. How can we define the distance between $X$ and $Y$?

Note that in principal $X$ and $Y$ are vectors in two difference tangent spaces, so it does not make sense to do subtraction directly. However, if we embed $M$ into an ambient Euclidean space by $\iota:M\to \Real^K$, then we can think of $X$ and $Y$ as vectors in $\Real^k$ and define
\[ d_1(X,Y):=|d\iota_x(X)-d\iota_y(Y)|_{\Real^K}.\]
We shall call $d_1$ the extrinsic distance of tangent vectors. The disadvantage of this distance is obvious: it relies on the embedding $\iota$ and there is no canonical embedding.

So we seek for more intrinsic distances. One natural way is to use the parallel transport to define an isomorphism between different tangent spaces. More precisely, suppose there is a geodesic $\ga:[0,1]\to M$ connecting $x$ and $y$, then we can parallel transport a vector along $\ga$, which give rise to an isomorphism $\P:T_yM\to T_xM$. Next we define
\[ d_2(X,Y):=|X-\P(Y)|_x.\]
This distance is intrinsic but in general depends on the choice of geodesic $\ga$. Fortunately, if $x$ and $y$ lies close enough, then we have a unique minimizing geodesic and the distance $d_2$ is canonically defined.

The intrinsic distance $d_2$ is particularly useful in proving uniqueness. Moreover, $d_2$ naturally arises when we take derivatives of the distance function on the manifold. Indeed, if $d:M\times M\to \Real^1$ is the distance function on $M$, then
\[ \n_{(X,Y)} d(x,y)=\<-\ga'(0), X\>_x+\<\ga'(1), Y\>_y=\<-\ga'(0), X-\P(Y)\>_x.\]

In fact, the intrinsic distance $d_2$ defined by the parallel transport can be generalized to any vector bundle. Suppose $\pi:(E,\n, h)\to M$ is a vector bundle over $M$. $X\in E_x, Y\in E_y$ are two vectors at $x,y\in M$ which are connected by a geodesic $\ga$. Thus we can find a lift $V:[0,1]\to E$ of $\ga$ such that $\n_sV=0$ and $V(1)=Y$. The parallel transport $\P:E_y\to E_x$ is then obtained by assigning $\P(Y)=V(0)$, and the distance is defined by
\[ d(X,Y)=|X-\P(Y)|_x.\]

\begin{rem}
There is still another intrinsic way to define the distance by using Jacobi fields. Namely, suppose there is a Jacobi fields $V$ on the geodesic $\ga$ such that $V(0)=X$ and $V(1)=Y$, then we define
\[ d_3(X,Y):=\left(\int_0^1|\n_sV|^2ds\right)^{\frac12}.\]
It turns out that this distance is equivalent to $d_2$ defined above, see for example \cite{CJW-Dirac-2013}.
\end{rem}

\begin{rem}
In \cite{DW-Schrodinger-1998}, Ding-Wang proved the uniqueness of \sch map flow for $C^3$ solutions using the extrinsic distance $d_1$. In \cite{McGahagan-2007}, McGahagan showed that the uniqueness holds in a larger function space by using the intrinsic distance $d_2$. This result was improved by Song-Wang~\cite{SW-uniqueness-2018} using the same idea but more intrinsic methods.
\end{rem}

\subsubsection{Parallel transport on Grassmannian manifolds and intrinsic distance of second fundamental forms}

A key idea of the current paper is to define an intrinsic distance of tensors by parallel transport in vector bundles over Grassmannian manifolds, which enable us to compare the second fundamental forms of two submanifolds in a more geometric way. For a preliminary introduction to the geometry of Grassmannian manifolds, we refer to \cite{Song-Gauss-2017}.

First recall that there are two canonical bundles on the Grassmannian manifold $G$, namely the tautological bundle $\G^\top$ where the fiber $\G^\top_\xi$ at each $\xi\in G$ is $\xi$ itself, and the normal counterpart $\G^\bot$ where the fiber $\G^\bot_\xi$ at $\xi$ is its normal complement space $\xi^\bot$.  Moreover, the tangent bundle $\G:=TG$ is isomorphic to the product bundle $\G^\top\otimes \G^\bot$.

Now suppose $F, \t{F}:\Si_0\to \Real^K$ are two submanifolds, and $\rho, \t\rho:\Si_0\to G$ are their Gauss maps. For each $x\in \Si_0$, let $\ga_x:[0,1]\to G$ be a geodesic in $G$ connecting $\rho(x)$ and $\t\rho(x)$. Then by parallel transport in the bundles $\G^\top, \G^\bot, \G$, we have three isomorphisms
\[ \P^\top_x:\G^\top_{\t\rho(x)}\to \G^\top_{\rho(x)}, \P^\bot_x:\G^\bot_{\t\rho(x)}\to\G^\bot_{\rho(x)},
\P_x:\G_{\t\rho(x)}\to \G_{\rho(x)}.\]
In particular, $\P_x=\P^\top_x\otimes\P^\bot_x$.

Next we let $x$ vary on $\Si$, and suppose there is a family of geodesics $\ga_x$ connection $\rho(x)$ and $\t\rho(x)$ for every $x\in \Si$. Then we get three bundle isometries
\[ \P^\top:\t\rho^*\G^\top\to \rho^*\G^\top, \P^\bot:\t\rho^*\G^\bot\to\rho^*\G^\bot,\P:\t\rho^*\G\to \rho^*\G.\]
Again, we have $\P=\P^\top\otimes\P^\bot$.

Recall by definition, $\G^\top_{\rho(x)}=\rho(x)=T_{F(x)}M$ is the tangent plane of $M=F(\Si_0)$ at $F(x)$, and $\G^\bot_{\rho(x)}=\rho(x)^\bot=N_{F(x)}M$ is the normal plane at $F(x)$. Thus we can identify the bundles
\[\H:=F^*TM\cong\rho^*\G^\top,\N:=F^*NM\cong\rho^*\G^\bot.\]
Similarly,
\[ \t\H:=\t{F}^*T\t{M}\cong\t\rho^*\G^\top,\t\N:=\t{F}^*N\t{M}\cong\t\rho^*\G^\bot.\]

 Therefore, the above bundle isomorphisms constructed by the parallel transport actually give us two bundle isometries between the tangent bundles and normal bundles, respectively, i.e.
\[ \P^\top:\t{\H}\to \H, \P^\bot:\t{\N}\to \N. \]

Finally, to extend the parallel transport to tensors including the second fundamental form $A\in\Ga(\N\otimes T^*\Si\otimes T^*\Si)$, we only need to find a bundle isometry between $T^*\Si$ and $\H$. This can be easily accomplished as follows.

First note that the tangent map $dF$ gives a bundle isometry between $T\Si:=(T{\Si_0},g)$ and $\H$. Similarly, $d\t{F}$ gives a bundle isometry between $T\t\Si:=(T{\Si_0},\t{g})$ and $\t\H$. This in turn gives an isometry $\Q$ between the tangent bundles $T\Si$ and $T\t\Si$ by letting the following diagram commute:
\[
\xymatrix{
  T\t\Si  \ar[d]_{\Q}\ar[r]^{d\t{F}}
                & \t{\mathcal{H}} \ar[d]^{\P^\top} \\
  T\Si \ar[r]_{dF}
                & \mathcal{H}          }
\]
In other words, we define
\[Q:=dF^{-1}\circ\P^\top\circ d\t{F}:T\t\Si\to T\Si\]
Since $d\t{F}, \P^\top,dF$ are all isometries, $Q$ is obviously an isometry.

Next we may identify the cotangent bundle $T^*\Si$ (or $T^*\t\Si$) with the tangent bundle $T\Si$ (respectively $T\t\Si$) by sending an orthonormal frame $\{\ep_1^*,\ep_2^*,\cdots, \ep_n^*\}$ to its dual $\{\ep_1,\ep_2,\cdots, \ep_n\}$. Thus by using the above isometry $\Q$, we have a dual isometry on the cotangent bundles $\Q^*:T^*\t{\Si}\to T^*\Si$.

Now for any $s\in \mathbb{N}$, we can define a bundle isometry by
\[ \R_s:=\P^\bot\otimes(\Q^*)^s:\t\N\otimes(T^*\t\Si)^s\to \N\otimes(T^*\Si)^s.\]
which gives an intrinsic distance of tensors $T\in \Ga(\N\otimes(T^*\Si)^s), \t{T}\in \Ga(\t\N\otimes(T^*\t\Si)^s)$ by
\[ d(T,\t{T})=|T-\R_s(\t{T})|_g.\]
In particular, we can define the ``intrinsic distance" of the second fundamental forms by
\[ d(A,\t{A})=|A-\R_2(\t{A})|_g.\]

\subsection{Bundle isomorphisms between two solutions}\label{s:parallel-transport}

\subsubsection{Construction of Bundle isomorphisms}

Now suppose $F$ and $\t{F}$ are two solutions to the SMCF (\ref{e:SMCF}) with same initial data $F_0$. Then there Gauss maps $\rho$ and $\t\rho$ are two solutions to (\ref{e:Gauss}) with same initial data $\rho_0$. We assume that their is a time $T>0$ such that $\rho$ and $\t\rho$ lies sufficiently close to each other such that for any $(t,x)\in [0,T]\times \Si$, there is a unique geodesic $\ga_{(t,x)}:[0,1]\to G$ connecting $\rho(t,x)$ and $\t\rho(t,x)$. The family of geodesics $\ga_{(t,x)}$ gives rise to a map
\[
\begin{aligned}&U\\ &\quad\end{aligned}
\left|
\begin{aligned}
&[0,1]\times[0,T]\times \Si&\to &\ G\\
&\quad\quad (s,t,x) &\mapsto &\ \ga_{(t,x)}(s)
\end{aligned}\right.
\]
By definition, we have $U(0,t,x)=\rho(t,x)$ and $U(1,t,x)=\t\rho(t,x)$. Note that the family of geodesics depends smoothly on its end points given by $\rho$ and $\t\rho$.

As in \cite{SW-uniqueness-2018}, we have the following bound for the derivatives of $U$.
\begin{lem}\label{l:estimate-U}
There holds
\begin{align*}
  |\n_t U|&\le C(|\p_t\rho|+|\p_t\t\rho|)\le C(|\n^2 \rho|+|\tn^2\t\rho|),\\
  |\n U|&\le C(|\n \rho|+|\tn\t\rho|),\\
  |\n^2 U|&\le C(|\n^2 \rho|+|\tn^2\t\rho|)+C(|\n \rho|+|\tn\t\rho|)^2.
\end{align*}
Moreover,
\[ \(\int_0^1|\n\p_s U|^2ds\)^\frac{1}{2}\le |\n \rho-\P(\tn\t\rho)|+C(|\n \rho|+|\tn\t\rho|)d.\]
\end{lem}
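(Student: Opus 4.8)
The estimates for $\n_t U$, $\n U$ and $\n^2 U$ are all instances of the standard fact that a smooth family of short geodesics depends smoothly on its endpoints, with derivative bounds governed by the curvature of $G$; since $G = G(n,2)$ is a fixed compact symmetric space, its curvature tensor and all its covariant derivatives are bounded by absolute constants. Concretely, for fixed $(t,x)$ write $\ga = \ga_{(t,x)}$ and let $J(s)$ be the Jacobi field along $\ga$ with $J(0) = \partial_t\rho$, $J(1) = \partial_t\t\rho$ (when differentiating in $t$; similarly in $x$). Since $\ga$ is short, the Jacobi equation $\n_s^2 J + R(J,\ga')\ga' = 0$ is a small perturbation of $\n_s^2 J = 0$, so $\sup_s|J(s)|$ and $\sup_s|\n_s J(s)|$ are comparable to $|J(0)| + |J(1)|$; this gives $|\n_t U| \le C(|\partial_t\rho| + |\partial_t\t\rho|)$ and $|\n U| \le C(|\n\rho| + |\tn\t\rho|)$. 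For $\n^2 U$ one differentiates the geodesic family twice, producing a Jacobi-type equation with an inhomogeneous term quadratic in the first derivatives (this is where the $(|\n\rho| + |\tn\t\rho|)^2$ term enters), and the same ODE comparison argument applies. The replacement of $|\partial_t\rho|$ by $|\n^2\rho|$ (and similarly for $\t\rho$) is just the Schr\"odinger map equation $\partial_t\rho = \J_\vep(\rho)\tau_g(\rho)$ together with $|\tau_g(\rho)| \le C|\n^2\rho|$.

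The last inequality — the $L^2$ bound on $\n\partial_s U$ along the geodesic — is the one genuinely needing care, and it is the main obstacle. The point is that $\partial_s U(s)$ is the velocity field $\ga'_{(t,x)}(s)$, which is parallel along $\ga$ (as $\ga$ is a geodesic), so $\n_s\partial_s U = 0$ and $\partial_s U$ is determined by parallel-transporting $\partial_s U(1) = $ (the initial velocity read at the far endpoint) back along $\ga$; equivalently, $|\partial_s U|$ is constant in $s$ and equals (up to the length normalization $s\in[0,1]$) the distance $d = d(\rho,\t\rho)$ in $G$. We want to control $\n\partial_s U$, i.e. the $x$-derivative of this velocity field. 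Differentiating the geodesic equation $\n_s\partial_s U = 0$ in the $\Si$-direction and commuting derivatives, $\n\partial_s U$ satisfies along each $\ga$ a first-order ODE in $s$ of the form $\n_s(\n\partial_s U) = R(\n U, \partial_s U)\partial_s U$, with the boundary behaviour of $\n\partial_s U$ at the two endpoints governed by $\n_x(\ga'(0)) $ and $\n_x(\ga'(1))$. Using the variational characterization of the parallel transport $\P$ (which is itself built from this same geodesic family at $s$ ranging over $[0,1]$), the endpoint data of $\n\partial_s U$ is exactly $\n\rho - \P(\tn\t\rho)$ up to terms of size $|\partial_s U|\cdot|\n U| \le d\,(|\n\rho| + |\tn\t\rho|)$ coming from the curvature-induced holonomy. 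Integrating the ODE in $s$ and using the uniform curvature bounds on $G$ then yields
\[
\(\int_0^1|\n\partial_s U|^2\,ds\)^{1/2} \le |\n\rho - \P(\tn\t\rho)| + C(|\n\rho| + |\tn\t\rho|)\,d,
\]
as claimed.

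In carrying this out I would follow the blueprint of \cite{SW-uniqueness-2018}, where the analogous estimates are established for a geodesic family in a general target manifold; here the target is the fixed K\"ahler manifold $G = G(n,2)$, so all the "constants depending on the geometry of the target" become genuine absolute constants, and no curvature assumption on $G$ needs to be tracked. The one place to be careful is the identification, at the $s=1$ endpoint, of the quantity $\n\partial_s U(1)$ with $\tn\t\rho$ transported by $\P$: one must check that the curvature correction terms that arise from commuting $\n$ past $\n_s$ are controlled by $d$ times first derivatives of $\rho,\t\rho$, which is exactly the second summand on the right. Everything else is a routine Gr\"onwall/ODE-comparison argument on the compact interval $s\in[0,1]$, which I would not spell out in detail.
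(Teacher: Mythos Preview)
Your proposal is correct and follows the same route as the paper: the paper itself gives no argument for this lemma, simply stating that the bounds hold ``as in \cite{SW-uniqueness-2018}'', and your sketch---Jacobi fields for the first three inequalities, the first-order ODE $\n_s(\n\partial_s U)=R(\partial_s U,\n U)\partial_s U$ for the last, all with constants absorbed by the bounded geometry of $G(n,2)$---is exactly the content of that reference. One small imprecision: the ``endpoint data'' of $\n\partial_s U$ is not literally $\n\rho-\P(\tn\t\rho)$; rather, using $\n\partial_s U=\n_s\n U$ and integrating in $s$ (after parallel transport to a fixed fiber) gives $\int_0^1\n_s\n U\,ds=\P(\tn\t\rho)-\n\rho$, and the Jacobi equation bounds $\n_s^2\n U$ by $Cd^2(|\n\rho|+|\tn\t\rho|)$, so $\n_s\n U$ is nearly constant in $s$ and equals that difference up to the stated error.
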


From previous discussion in Section~\ref{s:intrinsic-distance}, we know that by parallel transport along $U$, we have bundle isomorphisms
\[ \P^\top:\t\rho^*\G^\top\to \rho^*\G^\top, \quad \P^\perp:\t\rho^*\G^\perp\to \rho^*\G^\perp,
\P=\P^\top\otimes\P^\bot:\t\rho^*\G\to \rho^*\G.\]
In particular, $\P^\top$ induces bundle isomorphisms between tangent bundles and cotangent bundles
\[ \Q=dF^{-1}\circ \P^\top\circ d\t{F}:T\t\Si\to T\Si, ~~~~\Q^*:T^*\t\Si\to T^*\Si.\]
Moreover, we have an extended bundle isomorphism for tensor bundles
\begin{equation}\label{e:Rs}
  \R_s:=\P^\bot\otimes(\Q^*)^s:\t\N\otimes(T^*\t\Si)^s\to \N\otimes(T^*\Si)^s.
\end{equation}

\subsubsection{Estimate of derivatives of $\P$}

By construction, the bundle isomorphism $\P$ preserves the bundle metric. Moreover, since the complex structure on $G$ is parallel, $\P$ also preserves the complex structure of the bundle, i.e. $\P\circ \t{J}=J\circ \P$. However, the two pull-back connections $\n=\rho^*\n^\G$ and $\t\n=\t\rho^*\n^\G$ does not commute with $\P$.  Actually, there holds
\[\bn\P=\n \circ \P - \P\circ \tn.\]
Here we regard $\P$ as a tensor in $\Ga(\t\G^*\otimes \G)=\Hom(\t\G,\G)$ and $\bn$ denotes the induced connection.

To estimate the derivatives of $\P$, we will use the method of moving fames to express $\P$ more explicitly. An alternative method is to use the Jacobi equation, which is applied in \cite{LM-uniqueness-2017}.

For fixed $(t,x)\in I\times\Si$, we first choose an orthonormal frame $\{E_{a}\}$ of $\rho^*\G$ near $\rho(t,x)$, and then parallel transport it along the connecting geodesics to get a local frame $\{\bar{E}_{a}(s)\}$ of $U^*\G$ over $\ga_{(t,x)}(s)$. Denote the resulting frame at $\t\rho(t,x)$ by $\{\t{E}_{a}:=\bar{E}_{a}(1)\}$, which is also orthonormal. We shall call the chosen parallel orthonormal frame on the bundles by \emph{relative frame}.

Obviously, in this frame, we have $\P(\t{E}_{a})=E_{a}$. Thus by letting $\t\Om^a:=\t{E}_a^*$ be the dual frame, we can simply write
\[\P=\t\Om^a\otimes E_a\in \Ga(\t\G^*\otimes \G).\]

Now in the relative frame, the connections has the form $\n=d+A$ and $\tn=d+\t{A}$, where $A=A_{a,i}^bdx^i$ and $\t{A}=\t{A}_{a,i}^bdx^i$ are connection 1-forms. Then we compute
\begin{align*}
  \b{\n}P&=\tn\t\Om^a\otimes E_a+\t\Om^a\otimes\n E_a\\
  &=(-\t{A}^a_b\t\Om^b)\otimes E_a+\t\Om^a\otimes (A^b_a E_b)\\
  &=(A^b_a-\t{A}^b_a)\t\Om^a\otimes E_b.
\end{align*}

Let $\b{\n}=U^*\n^\G=d+\b{A}$ be the pull-back connection on $U^*\G$ over $[0,1]\times[0,T]\times \Si$, where $\b{A}$ is the corresponding connection 1-form in the parallel frame $\{\b{E}_a\}$. Since the frame we choose are parallel along $s$-direction, we can write
\[ \b{A}=\b{A}_tdt+\b{A}_idx^i.\]
The curvature form of $\bn$ is given by
\[ \b{F}=d\b{A}+[\b{A},\b{A}]=U^*R^\G.\]
It follows that the $ds\wedge dx^i$ component of $\b{F}$ is
\[\b{F}_{si}=\p_s\b{A}_i=R^\G(\p_sU, \n_i U).\]

On the other hand,since $\n$ and $\tn$ are the restrictions of $\bn$ on $\rho^*\G=U^*\G|_{s=0}$ and $\t\rho^*\G=U^*\G|_{s=1}$, we have $A(t,x)=\b{A}(0,t,x)$ and $\t{A}(t,x)=\b{A}(1,t,x)$. Hence the difference of two connections is given by
\[  A_i-\t{A}_i=\b{A}_i|_{s=0}-\b{A}_i|_{s=1}=-\int_0^1\p_s\b{A_i}ds=-\int_0^1\b{F}_{is}ds. \]
Therefore, we get
\[
  \bn\P=-\int_0^1R^\G(\p_sU, \n_i U)_a^bds\cdot dx^i\otimes\t\Om^a\otimes E_b.
\]
Taking another derivative, we get
\begin{align*}
  \bn^2\P&=-\bn\(\int_0^1R^\G(\p_sU, \n_i U)_a^bds\cdot dx^i\otimes\t\Om^a\otimes E_b\)\\
  &=-\int_0^1\bn(R^\G(\p_sU, \n_i U)_a^b)ds\cdot dx^i\otimes\t\Om^a\otimes E_b+\int_0^1R^\G(\p_sU, \n U)ds\cdot \bn(dx^i\otimes\t\Om^a\otimes E_b)\\
  &=-\int_0^1(\n^GR^\G)(\p_sU, \n U,\n U)ds-\int_0^1 R^\G(\n\p_sU, \n U)ds-\int_0^1 R^\G(\p_sU, \n^2 U)ds\\
  &\quad +\int_0^1R^\G(\p_sU, \n U)ds\cdot (A-\t{A}).
\end{align*}

As a conclusion, we have
\begin{lem}\label{l:estimate-P}
  The derivatives of $\P$ satisfies
\begin{align*}
|\bn_t\P|&\le Cd|\n_tU|\\
  |\bn \P|&\le Cd|\n U|.\\
|\tn^2\P|&\le C(|\n U|+|\n^2U|)d + C|\n U||\n\p_s U|,
\end{align*}
where the constant $C$ only depends on $G$.
\end{lem}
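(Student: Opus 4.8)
The two integral representations
\[
\bn\P=-\int_0^1R^\G(\p_sU,\n_iU)_a^b\,ds\cdot dx^i\otimes\t\Om^a\otimes E_b,
\]
\[
\bn^2\P=-\int_0^1(\n^GR^\G)(\p_sU,\n U,\n U)ds-\int_0^1R^\G(\n\p_sU,\n U)ds-\int_0^1R^\G(\p_sU,\n^2U)ds+\int_0^1R^\G(\p_sU,\n U)ds\cdot(A-\t{A})
\]
derived above already do all the structural work, so the plan is simply to estimate the integrands, using three inputs. First, $G=G(n,2)$ is a compact symmetric space, hence its curvature tensor and its covariant derivative are bounded: $|R^\G|+|\n^GR^\G|\le C$ with $C=C(G)$. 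Second, for fixed $(t,x)$ the curve $s\mapsto U(s,t,x)=\ga_{(t,x)}(s)$ is the unique minimizing geodesic from $\rho(t,x)$ to $\t\rho(t,x)$, so $|\p_sU(s,t,x)|$ is independent of $s$ and equals the length $d=d(\rho(t,x),\t\rho(t,x))$. Third, the $t$- and $\Si$-derivatives of $U$ are controlled, uniformly in $s$, by Lemma~\ref{l:estimate-U}.

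\textbf{First-order estimates.} Taking pointwise norms in the formula for $\bn\P$ and using $|R^\G|\le C$ together with $|\p_sU|\equiv d$,
\[
|\bn\P|\le\int_0^1|R^\G|\,|\p_sU|\,|\n U|\,ds\le C\,d\,\sup_{s}|\n U|\le Cd|\n U|,
\]
and the identical computation applied to $\bn_t\P=-\int_0^1R^\G(\p_sU,\n_tU)ds$ gives $|\bn_t\P|\le Cd|\n_tU|$. (Throughout, $|\n U|$, $|\n_tU|$ etc. denote the $s$-independent bounds furnished by Lemma~\ref{l:estimate-U}.)

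\textbf{Second-order estimate.} Bound the four terms separately. The first term is $\le C\int_0^1|\n^GR^\G|\,|\p_sU|\,|\n U|^2\,ds\le Cd|\n U|^2$; since $|\n U|\le C(|\n\rho|+|\tn\t\rho|)$ is bounded, this is $\le Cd|\n U|$. The third term is $\le C\int_0^1|\p_sU|\,|\n^2U|\,ds\le Cd|\n^2U|$. For the fourth term one first notes, from the same representation $A_i-\t{A}_i=-\int_0^1R^\G(\n_iU,\p_sU)ds$, that $|A-\t{A}|\le Cd|\n U|$, whence this term is $\le Cd|\n U|\cdot Cd|\n U|=Cd^2|\n U|^2\le Cd|\n U|$. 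Finally, for the second term I would apply Cauchy--Schwarz in the $s$-variable,
\[
\Bigl|\int_0^1R^\G(\n\p_sU,\n U)ds\Bigr|\le C\,\sup_{s}|\n U|\cdot\Bigl(\int_0^1|\n\p_sU|^2ds\Bigr)^{1/2},
\]
and then invoke the last inequality of Lemma~\ref{l:estimate-U} (or, with the convention $|\n\p_sU|:=(\int_0^1|\n\p_sU|^2ds)^{1/2}$, read it off directly) to obtain the contribution $C|\n U|\,|\n\p_sU|$. Summing the four bounds yields
\[
|\bn^2\P|\le C(|\n U|+|\n^2U|)d+C|\n U|\,|\n\p_sU|,
\]
as claimed.

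\textbf{Main obstacle.} There is no deep difficulty here: the lemma is essentially a bookkeeping exercise once the moving-frame formulas are available. The only points demanding care are (i) distinguishing quantities that genuinely depend on the geodesic parameter $s$ from those bounded uniformly in $s$, and using the geodesic property to replace $|\p_sU|$ by the constant $d$; and (ii) the $R^\G(\n\p_sU,\n U)$ term, where one cannot pull $|\n\p_sU|$ out in $L^\infty_s$ and must instead pass through Cauchy--Schwarz and the $L^2_s$-bound on $\n\p_sU$ from Lemma~\ref{l:estimate-U} — which is the one input resting on a Jacobi-field argument rather than on mere compactness of $G$.
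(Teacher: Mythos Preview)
Your proposal is correct and takes exactly the paper's route: the integral formulas for $\bn\P$ and $\bn^2\P$ are derived in the text immediately preceding the lemma, and the lemma is then stated as their consequence with the pointwise estimation left implicit, which is precisely what you supply. One small refinement worth noting: $G$ is a compact symmetric space, so in fact $\n^GR^\G\equiv 0$ and the first integrand in the formula for $\bn^2\P$ vanishes identically---this spares you the absorption of a stray factor $|\n U|$ into $C$ and keeps the constant depending only on $G$ as asserted.
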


\begin{rem}
  By replacing the bundle $\G$ with $\G^\top$ and $\G^\bot$ in the above arguments, it is easy to see that $\P^\top$ and $\P^\bot$ satisfies same estimates. Moreover, since $\bn\Q=\bn\P^{\top}$, $\Q$ also satisfies same estimates. It follows that for any $s\in \mathbb{N}$, the parallel translation $\R_s$ satisfies same estimates.
\end{rem}

\subsection{Difference of Laplacian operators}

\subsubsection{Difference of Laplacian with different metrics}

Let $(\E,\n,h)$ be a vector bundle over a manifold $\Si$, suppose there are two difference metric $g$ and $\t{g}$ on $\Si$. We denote by $\n_g, \Delta_g$ and $\n_{\t{g}}, \Delta_{\t{g}}$ the corresponding induced covariant derivatives and (rough) Laplacians. Then for any section $\Phi\in \Ga(E)$, by definition
\[\begin{aligned}
\Delta_g \Phi - \Delta_{\t{g}} \Phi
&=(g^{ij}-\t{g}^{ij})\n_i\n_j\Phi-(g^{ij}\Ga_{ij}^k-\t{g}^{ij}\t{\Ga}_{ij}^k)\n_k\Phi\\
&=(g^{ij}-\t{g}^{ij})\n_{g,ij}^2\Phi-\t{g}^{ij}(\Ga_{ij}^k-\t{\Ga}_{ij}^k)\n_k\Phi.
\end{aligned}\]
Thus
\[ |\Delta_g \Phi - \Delta_{\t{g}} \Phi|_h\le |g^{-1}-\t{g}^{-1}|_g|\n_g^2\Phi|_{h\oplus g}
+|\t{g}^{-1}|_g|\Ga-\t{\Ga}|_g|\n_g \Phi|_{h\oplus g},\]
where we use the metric $g$ for tensors on the right hand side.

\subsubsection{Difference of Laplacian with different connections}

Given two vector bundles $(E,\n, h)$ and $(\t{E},\tn, \t{h})$ over a Riemannian manifold $(\Si, g)$. Suppose there is a bundle isomorphism $\P:\t{E}\to E$ which preserves the metric, then we have for any $\Phi\in \Ga(\t{E})$,
\[\n(\P\Phi)=(\b\n\P)\Phi+\P(\tn\Phi),\]
where $\bn$ denotes the induced connection on $\t{E}^*\otimes E$. It follows
\begin{align*}
  \n^2(\P\Phi)&=\n((\bn \P)\Phi+\P(\tn\Phi)\\
  &=(\bn^2\P)\Phi+2(\bn \P)\tn\Phi+\P(\tn^2\Phi).
\end{align*}
Taking trace, we get
\[ \Delta(\P\Phi)-\P(\t{\Delta}\Phi)=(\b{\Delta}\P)\Phi+2\bn\P\cdot\tn\Phi.\]
Therefore,
\[ |\Delta(\P\Phi)-\P(\t{\Delta}\Phi)|_h\le |\b{\Delta}\P|_{\b{h}}|\Phi|_{\t{h}}+2|\bn\P|_{\b{h}}|\tn\Phi|_{\t{h}}, \]
where $\t{h}=\t{h}\oplus h$ is the induced metric on $\t{E}^*\otimes E$.

\subsubsection{Difference of Laplacian with different metrics and connections}

Now for two vector bundles $\pi:(E,\n,h)\to (\Si_0,g)$ and $\t\pi:(\t{E},\tn,\t{h})\to (\Si_0,\t{g})$, suppose there is a bundle isomorphism $\P:\t{E}\to E$ which preserves the metrics on the bundle, then we have for any $\Phi\in \Ga(\t{E})$,
\begin{align*}
  |\Delta_g(\P\Phi)-\P(\t{\Delta}_{\t{g}}\Phi)|_{h}
  &\le |\Delta_g(\P\Phi)-\P(\t{\Delta}_g\Phi)|_{h}+|\P(\t{\Delta}_g\Phi)-\P(\t{\Delta}_{\t{g}}\Phi)|_{h}\\
  &= |\Delta_g(\P\Phi)-\P(\t{\Delta}_g\Phi)|_{h}+|\t{\Delta}_g\Phi-\t{\Delta}_{\t{g}}\Phi|_{\t{h}}
\end{align*}
By the estimates above, we get
\begin{lem}\label{l:estimate-Laplacian}
Under above settings, we have
\begin{align*}
  &|\Delta_g(\P\Phi)-\P(\t{\Delta}_{\t{g}}\Phi)|_{h}\\
  &\le 2|\bn\P|_{\b{h}}|\tn_{\t{g}} \Phi|_{\t{h}}+|\b{\Delta}\P|_{\b{h}}|\Phi|_{\t{h}}+|g^{-1}-\t{g}^{-1}|_{\t{g}}|\tn^2_{\t{g}}\Phi|_{\t{h}\oplus\t{g}}
  +|g^{-1}|_{\t{g}}|\Ga-\t\Ga|_{\t{g}}|\tn_{\t{g}}\Phi|_{\t{h}\oplus\t{g}}.
\end{align*}
\end{lem}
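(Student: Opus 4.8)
The plan is to derive Lemma~\ref{l:estimate-Laplacian} by splicing together the two computations recorded just above — the one comparing Laplacians with different bundle connections over a fixed base metric, and the one comparing Laplacians with different base metrics for a fixed bundle connection — via a single triangle inequality through an intermediate operator. Concretely, I would introduce the hybrid Laplacian
\[ \t\Delta_g\Phi:=g^{ij}\big(\tn_i\tn_j\Phi-\Ga_{ij}^k\tn_k\Phi\big), \]
built from the bundle connection $\tn$ of $\t E$ but the base metric $g$ of $\Si_0$, and split
\[ \Delta_g(\P\Phi)-\P(\t\Delta_{\tg}\Phi)=\big(\Delta_g(\P\Phi)-\P(\t\Delta_g\Phi)\big)+\P\big(\t\Delta_g\Phi-\t\Delta_{\tg}\Phi\big). \]
Since $\P$ preserves the bundle metrics, the pointwise norm of the second summand equals $|\t\Delta_g\Phi-\t\Delta_{\tg}\Phi|_{\t h}$, so it remains to bound the two pieces separately.

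For the first piece, where the base metric is $g$ on both sides, I would reuse the computation just above: from $\n(\P\Phi)=(\bn\P)\Phi+\P(\tn\Phi)$ one gets $\n^2(\P\Phi)=(\bn^2\P)\Phi+2(\bn\P)\tn\Phi+\P(\tn^2\Phi)$, and tracing with $g^{-1}$ the last term is exactly $\P(\t\Delta_g\Phi)$ and cancels, leaving $\Delta_g(\P\Phi)-\P(\t\Delta_g\Phi)=(\b\Delta\P)\Phi+2\,\bn\P\cdot\tn\Phi$; taking norms yields the first two terms of the lemma. For the second piece, where the bundle connection $\tn$ is unchanged but the base metrics differ, I would run the metric-difference computation with $\tg$ playing the role of the reference metric: a direct rewriting gives
\[ \t\Delta_g\Phi-\t\Delta_{\tg}\Phi=(g^{ij}-\tg^{ij})\,\tn^2_{\tg,ij}\Phi-g^{ij}(\Ga_{ij}^k-\tGa_{ij}^k)\,\tn_k\Phi, \]
so that $|\t\Delta_g\Phi-\t\Delta_{\tg}\Phi|_{\t h}\le|g^{-1}-\tg^{-1}|_{\tg}|\tn^2_{\tg}\Phi|_{\t h\oplus\tg}+|g^{-1}|_{\tg}|\Ga-\tGa|_{\tg}|\tn_{\tg}\Phi|_{\t h\oplus\tg}$, which are the remaining two terms. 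Adding the two bounds finishes the proof.

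I do not expect a real obstacle here: the substance is already contained in the two identities derived above, and what is left is bookkeeping. The points that need a little care are (i) routing the triangle inequality through $\t\Delta_g$ rather than $\t\Delta_{\tg}$, so that the connection-difference estimate is invoked with precisely the base metric $g$ for which it was proved; (ii) applying the metric-difference estimate with $\tg$ (not $g$) as the reference metric, which is what produces the $\tg$-norms and the coefficient $g^{-1}$ in the last two terms; and (iii) keeping consistent track of which base metric is used to measure each tensorial norm, together with the (harmless) dimensional contraction constant from the $g$-trace in the cross term $2\,\bn\P\cdot\tn\Phi$, which in a $g$-orthonormal frame costs nothing beyond the displayed factor $2$.
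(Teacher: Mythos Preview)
Your proposal is correct and follows essentially the same route as the paper: the paper also inserts the hybrid operator $\t\Delta_g$ and uses the triangle inequality $|\Delta_g(\P\Phi)-\P(\t\Delta_{\t g}\Phi)|_h\le |\Delta_g(\P\Phi)-\P(\t\Delta_g\Phi)|_h+|\t\Delta_g\Phi-\t\Delta_{\t g}\Phi|_{\t h}$, then invokes the two preceding computations for the connection-difference and metric-difference pieces respectively. Your care in point (ii) --- rerunning the metric-difference identity with $\t g$ as the reference so that the $|g^{-1}|_{\t g}$ coefficient and the $\t g$-norms appear --- is exactly what is needed to match the stated bound.
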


\

As a conclusion of Lemma~\ref{l:estimate-Laplacian}, Lemma~\ref{l:estimate-P} and Lemma~\ref{l:estimate-U}, we obtain
\begin{cor}\label{l:corollary-laplacian}
For any tensor $\Phi\in \Ga(\t\N\otimes (T^*\t\Si)^s)$ and the parallel transport $\R_s$ defined by (\ref{e:Rs}), if $g$ and $\t{g}$ are equivalent, then there holds
\[
  |\Delta_g(\R_s\Phi)-\R_s(\t{\Delta}_{\t{g}}\Phi)|
\le C(d+|A-\P(\t{A})|+|g-\t{g}|+|\Ga-\t\Ga|).
\]
where $C$ depends on $\norm{\rho}_{2,\infty}, \norm{\t\rho}_{2,\infty}$ and $\norm{\Phi}_{2,\infty}$.
\end{cor}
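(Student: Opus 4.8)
The plan is to assemble Corollary~\ref{l:corollary-laplacian} directly from the three preceding lemmas by a telescoping decomposition. Write $\Phi\in\Ga(\t\N\otimes(T^*\t\Si)^s)$ and recall $\R_s=\P^\bot\otimes(\Q^*)^s$ is a bundle isometry covering the identity on $\Si_0$. The first step is to invoke Lemma~\ref{l:estimate-Laplacian} with $\P$ replaced by $\R_s$: since $\R_s$ preserves the bundle metrics (it is built from the parallel transports $\P^\bot,\Q,\Q^*$, each of which is an isometry), the hypotheses of that lemma are met, and we obtain
\[
|\Delta_g(\R_s\Phi)-\R_s(\t\Delta_{\t g}\Phi)|
\le 2|\bn\R_s||\tn_{\t g}\Phi|+|\bar\Delta\R_s||\Phi|
+|g^{-1}-\t g^{-1}||\tn^2_{\t g}\Phi|+|g^{-1}||\Ga-\t\Ga||\tn_{\t g}\Phi|.
\]
This already produces the $|g-\t g|$ and $|\Ga-\t\Ga|$ terms on the right, after noting that when $g$ and $\t g$ are equivalent, $|g^{-1}-\t g^{-1}|_{\t g}\le C|g-\t g|$ and $|g^{-1}|_{\t g}\le C$, with $C$ controlled by the equivalence constants; the tensor norms $|\tn_{\t g}\Phi|,|\tn^2_{\t g}\Phi|$ are absorbed into the $C$ depending on $\norm{\Phi}_{2,\infty}$.

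The second step is to estimate $|\bn\R_s|$ and $|\bar\Delta\R_s|$ via Lemma~\ref{l:estimate-P} and its remark, which asserts that $\P^\bot$ and $\Q^*$ — hence $\R_s$ — satisfy the same bounds as $\P$. Thus $|\bn\R_s|\le Cd|\n U|$ and $|\bar\Delta\R_s|=|\tr\bn^2\R_s|\le C(|\n U|+|\n^2U|)d+C|\n U||\n\p_s U|$. The third step is to feed in Lemma~\ref{l:estimate-U} to replace the $U$-derivatives by curvature quantities of $\rho,\t\rho$: $|\n U|\le C(|\n\rho|+|\tn\t\rho|)$, $|\n^2U|\le C(|\n^2\rho|+|\tn^2\t\rho|)+C(|\n\rho|+|\tn\t\rho|)^2$, and, crucially, the integral bound $\big(\int_0^1|\n\p_sU|^2ds\big)^{1/2}\le|\n\rho-\P(\tn\t\rho)|+C(|\n\rho|+|\tn\t\rho|)d$. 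Since the $\bar\Delta\R_s$ term enters only through $\int_0^1$ of curvature expressions along the geodesic (as in the explicit formula for $\bn^2\P$ above), the factor $|\n\p_sU|$ appears under an $s$-integral and is handled by Cauchy–Schwarz together with this last estimate; all the $|\n\rho|,|\tn\t\rho|,|\n^2\rho|,|\tn^2\t\rho|$ factors are then bounded by $\norm{\rho}_{2,\infty}$ and $\norm{\t\rho}_{2,\infty}$ and swept into $C$. Finally I would recall from Section~\ref{s:intrinsic-distance} that $d\rho$ is identified with $A$ (and $\tn\t\rho$ with $\t A$, modulo the isometry identifications), so that $|\n\rho-\P(\tn\t\rho)|$ is exactly $|A-\P(\t A)|$ up to the identification, yielding that term in the statement; the bare $d=d(\rho,\t\rho)$ already appears as an explicit factor.

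The main obstacle I anticipate is bookkeeping rather than depth: one must be careful that the various metrics used to measure each tensor ($h$, $\t h$, $\bar h=\t h\oplus h$, and their couplings with $g$ or $\t g$) are mutually comparable, which is precisely where the hypothesis ``$g$ and $\t g$ are equivalent'' is used, and that the constants produced at each stage genuinely depend only on $G$ (through $R^\G$ and $\n^\G R^\G$), on $\norm{\rho}_{2,\infty},\norm{\t\rho}_{2,\infty}$, and on $\norm{\Phi}_{2,\infty}$ — not on higher norms. A secondary subtlety is that $\bar\Delta\R_s$ naively looks like it needs three derivatives of $U$ (hence of $\rho,\t\rho$), but the explicit moving-frame formula for $\bn^2\P$ shows the third-derivative term is really $\int_0^1\n\p_sU$ paired with curvature, so the $L^2_s$ estimate on $\n\p_sU$ from Lemma~\ref{l:estimate-U} keeps everything at the level of second derivatives of $\rho,\t\rho$; making this reduction explicit is the one place where a little care is required.
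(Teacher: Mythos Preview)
Your proposal is correct and follows exactly the route the paper indicates: the corollary is stated there simply ``as a conclusion of Lemma~\ref{l:estimate-Laplacian}, Lemma~\ref{l:estimate-P} and Lemma~\ref{l:estimate-U}'', and you have spelled out precisely how those three lemmas combine, including the use of the remark after Lemma~\ref{l:estimate-P} to extend the $\P$-estimates to $\R_s$, the Cauchy--Schwarz handling of the $\int_0^1|\n\p_sU|$ term, and the identification of $|\n\rho-\P(\tn\t\rho)|$ with $|A-\P(\t A)|$. Your discussion of the bookkeeping subtleties (metric equivalence, dependence of constants, why only second derivatives of $\rho,\t\rho$ enter) is accurate and matches the level of care implicit in the paper's treatment.
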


In the above theorem, since the metrics $g$ and $\t{g}$ are assumed to be equivalent, i.e. there exists constants such that $C_1g\le \t{g}\le C_2g$, we can use either one for tensors. Thus we omit the subscripts denoting the metrics here and subsequently.

\subsection{Estimate of $\L$}\label{s:estimate-L}

By the discussions in Section~\ref{s:parallel-transport} above, we have the parallel transport $\R_s$ defined by (\ref{e:Rs}). Now we define the energy functional $\L(F,\t{F})=\L_1+\L_2+\L_3$ where
\[ \L_1=\norm{d(\rho,\t\rho)}_2^2+\norm{A-\R_2(\t{A})}_2^2+\norm{\n A-\R_3(\tn\t{A})}_2^2,\]
and
\[ \L_2=\norm{g-\t{g}}_2^2+\norm{\Ga-\t{\Ga}}_2^2,\]
and
\[ \L_3=\norm{I-\Q}_2^2.\]
Here we assume the metrics $g$ and $\t{g}$ are equivalent, thus we may choose a fixed background metric $g_0$ (e.g. the induced metric of the initial immersion $F_0$) to define the norm in $\L_2$. In the definition of $\L_3$, $I$ is the identity map from $T\t\Si$ to $T\Si$ and the norm is taken with regard to the induced metric $\b{g}=\t{g}\oplus g$.

In what follows, we will derive a Gronwall type estimate of $\L$.

\subsubsection{Estimate of $\L_1$}

First using equation (\ref{e:Gauss}), we compute
\begin{align*}
  \frac12\frac{d}{dt}\int d(\rho,\t\rho)^2&=\int \<-\ga'(0), \p_t\rho\>+\<\ga'(1),\p_t\t\rho\>\\
  &=\int \<-\ga'(0),\J(\rho)\Delta\rho-\P(\J(\t\rho)\t{\Delta}\t\rho)\>\\
  &=\int \<-\ga'(0),\J(\rho)\tr_g\n d\rho - \J(\rho)\P(\tr_{\t{g}}\tn d\t\rho)\>\\
  &=\int \<\J(\rho)\ga'(0),\tr_g(\n A-\R_3(\tn \t{A}))\>\\
  &\le \int d^2+\int|\n A-\R_3(\tn \t{A})|^2,
\end{align*}
where $d:=d(\rho,\t\rho)$.

For the second term of $\L_1$, recall that by Lemma~\ref{l:evolution-equ},
\[ \n_tA=J\Delta A+A^3, \]
and
\[ \tn_t\t{A}=\t{J}\t{\Delta}\t{A}+\t{A}^3.\]
It follows
\begin{align*}
  \n_tA-\n_t\R(A)&=J\Delta A+A^3-\R(\t{J}\t{\Delta}\t{A}+\t{A}^3)-\bn_t\R(\t{A})\\
  &=J(\Delta A- \R(\t{\Delta}\t{A}))+(A^3-\R(\t{A})^3)-\bn_t\R(\t{A})\\
  &=J\Delta(A-\R(\t{A}))+J(\Delta (\R\t{A})- \R(\t{\Delta}\t{A}))+(A^3-\R(\t{A})^3)-\bn_t\R(\t{A}).
\end{align*}
Hence
\begin{align*}
  \frac12\frac{d}{dt}\int |A-\R(\t{A})|_g^2&=\int \<A-\R(\t{A}),\n_tA-\n_t\R(\t{A})\>\\
  &=\int \<A-R(\t{A}),J(\Delta (\R\t{A})- \R(\t{\Delta}\t{A}))+(A^3-\R(\t{A})^3)-\bn_t\R(\t{A})\>\\
  &\le C\(\int |A-\R(\t{A})|_g^2+\int|\Delta (\R\t{A})- \R(\t{\Delta}\t{A})|^2+\int|\bn_t\R|^2\)
\end{align*}
where $C$ depends on $\norm{A}_{\infty}, $ and $\norm{\t{A}}_{\infty}$.

By Lemma~\ref{l:estimate-P} and Lemma~\ref{l:estimate-U},
\[ |\bn_t\R|\le Cd|\p_t U|\le C(|\n A|+|\tn\t{A}|)d.\]
On the other hand, by Corollary~\ref{l:corollary-laplacian},
\[ |\Delta (\R\t{A})- \R(\t{\Delta}\t{A})|\le C(d+|A-\P(\t{A})|+|g-\t{g}|+|\Ga-\t\Ga|).\]
where $C$ depends on $\norm{\rho}_{2,\infty}, \norm{\t\rho}_{2,\infty}$ and $\norm{\t{A}}_{2,\infty}$.
Therefore, we arrive at
\[ \frac12\frac{d}{dt}\int |A-\R(\t{A})|^2\le C\(\int d^2+\int |A-\R(\t{A})|_g^2+\int|g-\t{g}|^2+\int|\Ga-\t\Ga|^2\).\]
where $C$ depends on $\norm{A}_{1,\infty}$ and $\norm{\t{A}}_{2,\infty}$.

For the third term of $\L_1$, recall that by Lemma~\ref{l:evolution-equ},
\[ \n_t\n A=J\Delta \n A + A\#A\#\n A,\]
and
\[ \tn_t\tn \t{A}=\t{J}\t\Delta\tn\t{A} + \t{A}\#\t{A}\#\tn\t{A}.\]
It follows that
\begin{align*}
  \n_t\n A-\n_t(\R\tn\t{A})&=J\Delta \n A + A\#A\#\n A-\R(\t{J}\t\Delta\tn\t{A} - \t{A}\#\t{A}\#\tn\t{A})-\bn_t\R(\tn\t{A})\\
  &=J\Delta(\n A-\R(\tn\t{A}))+J(\Delta(\R(\tn\t{A}))-\R(\t\Delta\tn\t{A}))\\
  &\ \ \ \ \ \ \ +(A\#A\#\n A-\R(\t{A})\#\R(\t{A})\#\R(\tn\t{A}))-\bn_t\R(\tn\t{A})
\end{align*}
So we have
\begin{align*}
  &\quad\frac12\frac{d}{dt}\int|\n A-\R(\tn\t{A})|^2\\
  &=\int\<\n A-\R(\tn\t{A}),\n_t\n A-\n_t(\R\tn\t{A})\>\\
  &=\int\<\n A-\R(\tn\t{A}),J(\Delta(\R(\tn\t{A}))-\R(\t\Delta\tn\t{A}))\>\\
  &\ \ \ \ \ \ \ +\int\<\n A-\R(\tn\t{A}),(A\#A\#\n A-\R(\t{A})\#\R(\t{A})\#\R(\tn\t{A}))-\bn_t\R(\tn\t{A})\>\\
  &\le C\(\int|A-\R(\t{A})|_g^2+\int |\n A-\R(\tn\t{A})|^2\right.\\
  &\ \ \ \ \ \ \ +\left.\int|\Delta(\R(\tn\t{A}))-\R(\t\Delta\tn\t{A})|^2+\int|\bn_t\R(\tn\t{A})|^2\).
\end{align*}
Again by Corollary~\ref{l:corollary-laplacian}, we have
\[
   |\Delta (\R(\tn\t{A}))- \R(\t{\Delta}\tn\t{A})|
   \le C(d+|A-\P(\t{A})|+|g-\t{g}|+|\Ga-\t\Ga|).
\]
where $C$ depends on $\norm{\rho}_{2,\infty}, \norm{\t\rho}_{2,\infty}$ and $\norm{\tn\t{A}}_{2,\infty}$.
 Therefore, we obtain
 \begin{align*}
   &\frac12\frac{d}{dt}\int|\n A-\R(\tn\t{A})|^2\\
   &\le C\(\int d^2+\int|A-\R_2(\t{A})|_g^2+\int |\n A-\R_3(\tn\t{A})|^2+\int|g-\t{g}|^2+\int|\Ga-\t\Ga|^2\).
 \end{align*}

 In conclusion, we have
 \begin{equation}\label{e:L1}
 \frac{d}{dt}\L_1\le C(\L_1+\L_2).
 \end{equation}
 where $C$ depends on $\norm{A}_{2,\infty}$ and $\norm{\t{A}}_{3,\infty}$.

\subsubsection{Estimate of $\L_2$}

First recall that by Lemma~\ref{l:evolution-equ},
\[ \p_t g=-2\<JH,A\>, \quad \p_t \t{g}=-2\<\t{J}\t{H},\t{A}\>.\]
Thus
\begin{align*}
  \p_tg-\p_t\t{g}&=-2\<JH,A\>+2\<\P^\bot(\t{J}\t{H}),\P^\bot(\t{A})\>\\
  &=-2\<J(H-\P^\bot(\t{H})), A\>-2\<\P^\bot(\t{J}\t{H}),A-\P^\bot(\t{A})\>
\end{align*}
It follows
\[ |\p_t(g-\t{g})|\le C|A-\P^\bot(\t{A})|\le C(|A-\R_2(\t{A})|+|I-\Q||\t{A}|).\]
and
\[ \frac12\p_t\int|g-\t{g}|^2\le C\(\int|g-\t{g}|^2+\int|A-\R_2(\t{A})|^2+\int|I-\Q|^2\),\]
where $C$ depends on $\norm{A}_\infty$ and $\norm{\t{A}}_\infty$.

Next, recall that the evolution equation of the Christoffel symbol is
\[\p_t\Ga=g^{-1}\#\n\p_tg, \quad \p_t\t\Ga=\t{g}^{-1}\#\t{\n}\p_t\t{g}.\]
Moreover,
\[ \n\p_t g=-2\n\<JH,A\>, \quad \tn\p_t \t{g}=-2\tn\<\t{J}\t{H},\t{A}\>.\]
It follows
\[\begin{aligned}
|\p_t(\Ga-\t\Ga)|&=|g^{-1}\#\n\p_tg-\t{g}^{-1}\#\t{\n}\p_t\t{g}|\\
&\le C(|g^{-1}-\t{g}^{-1}|+|A-\t{A}|+|\n A-\t{\n}\t{A}|)\\
&\le C(|g^{-1}-\t{g}^{-1}|+|A-\R_2(\t{A})|+|\n A-\R_3(\t{\n}\t{A})|+(|\t{A}|+|\tn\t{A}|)|I-\Q|).
\end{aligned}\]
So we have
\begin{align*}
  &\quad\frac12\frac{d}{dt}\int|\Ga-\t\Ga|^2=\int\<\Ga-\t\Ga,\p_t(\Ga-\t\Ga)\>\\
  &\le C\(\int|\Ga-\t\Ga|^2+\int|g-\t{g}|^2+\int|A-\R_2(\t{A})|^2+\int |\n A-\R_3(\tn\t{A})|^2+\int|I-\Q|^2\),
\end{align*}
where $C$ depends on $\norm{A}_{1,\infty}$ and $\norm{\t{A}}_{1,\infty}$.

 In conclution, we have
 \begin{equation}\label{e:L2}
 \frac{d}{dt}\L_2\le C(\L_1+\L_2+\L_3).
 \end{equation}
 where $C$ depends on $\norm{A}_{1,\infty}$ and $\norm{\t{A}}_{1,\infty}$.

\subsubsection{Estimate of $\L_3$}

To estimate the time derivative of $\L_3$, we need to compute the evolution equation of the identity map $I:T\t\Si\to T\Si$. This is most easily done in natural coordinates.

Denote the natural coordinates in $\Si$ and $\t\Si$ by $x^i$ and $\t{x}^i$ (which is actually the same one on the parameter space $\Si_0$). Let $\p_i=\frac{\p}{\p x^i}$ and $\t\p_i=\frac{\p}{\p \t{x}^i}$. In this coordinates, the identity map is simply $I= d\t{x}^i\otimes \p_i$. Then we can compute
\[ \bn_t I=\tn_td\t{x}^i\otimes \p_i+d\t{x}^i\otimes \n_t\p_i=(\Ga_{tj}^i-\t\Ga_{tj}^i)d\t{x}^j\otimes \p_i.\]
On the other hand, we have
\[ \Ga_{tj}^i=\frac{1}{2}g^{ik}\p_tg_{jk}=-g^{ik}\<JH, A_{jk}\>,\]
and
\[ \t\Ga_{tj}^i=\frac{1}{2}\t{g}^{ik}\p_t\t{g}_{jk}=-\t{g}^{ik}\<\t{J}\t{H}, \t{A}_{jk}\>.\]
It follows
 \[ |\bn_t I|\le C(|g-\t{g}|+|A-\R_2(\t{A})|+|I-\Q|),\]
 where $C$ depends on $\norm{A}_{\infty}$ and$\norm{\t{A}}_{\infty}$.

 From this inequality and Lemma~\ref{l:estimate-P}, we can estimate
 \begin{align*}
   \frac12\frac{d}{dt}\int|I-\Q|^2&=\int\<I-\Q, \bn I-\bn \Q\>\\
   &\le \int |I-\Q|^2 +\int |\bn I|^2 +\int |\bn \Q|^2 \\
   &\le C\(\int |I-\Q|^2 + \int|g-\t{g}|^2 + \int |A-\R_2(\t{A})|^2 +\int d^2\).
 \end{align*}

 In conclusion, we get
 \begin{equation}\label{e:L3}
   \frac{d}{dt}\L_3\le C(\L_1+ \L_2+\L_3).
 \end{equation}
 where $C$ depends on $\norm{A}_{\infty}$ and$\norm{\t{A}}_{\infty}$.

\subsection{Proof of uniqueness} \label{s:proof-of-uniqueness}

Now we are ready to prove Theorem~\ref{t:main-uniqueness}.

\begin{proof}[Proof of Theorem~\ref{t:main-uniqueness}]
By assumption $F\in L^\infty([0,T], \S_2)$ and $\t{F}\in L^\infty([0,T], \S_3)$, the norms $\norm{A}_{2,\infty}$ and $\norm{A}_{3,\infty}$ are uniformly bounded for all time $t\in [0,T]$. It is easy to see that the induced metric $g$ and $\t{g}$ are equivalent.

Moreover, the Gauss maps $\rho, \t\rho$ of $F, \t{F}$ satisfy the \sch map flow (\ref{e:Gauss}) with same initial value $\rho_0$ given by the Gauss map of $F_0$. Since $\tau_g(\rho)=\n H$ and $\tau_{\t{g}}(\t\rho)=\tn\t{H}$ are uniformly bounded, there exists a time $T'>0$ such that for all $(x,t)\in \Si_0\times [0,T']$, the image $\rho(x,t), \t\rho(x,t)$ lies in a sufficiently small geodesic ball of the Grassmannian manifold $G$, which is centered at $\rho(x,t)$.
Therefore, we can construct the family of geodesics $U$ connecting $\rho$ and $\t\rho$, hence the parallel transport/bundle isomorphisms $\P, \Q, \R_s$ as in Section~\ref{s:parallel-transport}.

Then we can define the energy function $\L$ as in Section~\ref{s:estimate-L}. Next, combining estimates  (\ref{e:L1}), (\ref{e:L2}) and (\ref{e:L3}), we obtain
\begin{equation*}
  \frac{d}{dt}\L\le C\L, ~~\forall t\in[0,T'],
\end{equation*}
where $C$ depends on $\norm{A}_{2,\infty}$ and$\norm{\t{A}}_{3,\infty}$.
It follows that $\L(t)\le e^{Ct}\L(0)$ for $t\in [0,T']$. But $\L(0)=0$ since $F$ and $\t{F}$ have the same initial value. Hence $\L$ vanishes identically and $F=\t{F}$ on $[0,T']$.

Next, starting from the time $T'$, we can repeat the above arguments to get $F=\t{F}$ on another time interval $[T', T'+T'']$. Note that $T''$ can be chosen to be equal to $T'$, since it only depends on $\norm{A}_{1,\infty}$ and $\norm{\t{A}}_{1,\infty}$ which are uniformly bounded.

Therefore, after repeating the arguments for finitely many times, we can get the uniqueness on the whole time interval $[0,T]$, which finishes the proof of the theorem.
\end{proof}

\section{Appendix}

Let $F:\Si^n\to \Real^m$ be a compact immersed submanifold. In this appendix, we show that the energy $\E_k=\vol+\norm{H}_p^2 +\norm{A}_{H^{k,2}}^2$ is equivalent to the Sobolev norm of the Gauss map $\b{\E}_k=\norm{d\rho}_{W^{k,2}}^2$, where the Sobolev norms will be defined later. For a preliminary introduction to the geometry of Grassmannian manifolds and Gauss maps, we refer to \cite{Song-Gauss-2017}.

Let $D$ denote the usual connection of the exterior product space $\Ld:=\Ld^n\Real^m$, which is induced by the standard derivative on $\Real^m$. Let $\n$ denote the Levi-Civita connection of the Grassmannian manifold $G:=G(n,m-n)$ and $\Pi\in \Ga(T^*G\otimes T^*G\otimes NG)$ denote the second fundamental form of $G$ as a submanifold in $\Ld$. We can regard $\Pi$ as an 1-form $\Pi=\Pi_a dy^a$ on $G$, where each entry $\Pi_a\in \Ga(T^*G\otimes NG)$ is a linear map from $TG$ to $NG$.

The Gauss map of $F$ is a map $\rho:\Si\to G\hookrightarrow \Ld$. We will still denote the pull-back connections on the pull-back bundles $\rho^*T\Ld\otimes (T^*\Si)^s$ and $\rho^*TG\otimes (T^*\Si)^s$ by $D$ and $\n$ respectively. Then applying $D$ on $d\rho\in\Ga(\rho^*TG\otimes T^*\Si)$, we have
\[Dd\rho= (Dd\rho)^\top + (Dd\rho)^\bot=\n d\rho + \rho^*\Pi(d\rho),\]
where $\rho^*\Pi=\Pi_a\p_i\rho^adx^i$ is the pull-back 1-form on $\Si$. Since we can identify $d\rho$ with the second fundamental form $A$ of the immersion $F$, we can write the above equality as
\[ DA=\n A+\Pi(\rho)\#A^2,\]
where $\#$ denote linear combinations. Taking once more derivative, we get
\begin{align*}
  D^2A&=D\n A+D(\Pi(\rho)\#A^2)\\
  &=(\n^2A+\Pi\#A\#\n A)+(D\Pi\#A^3+\Pi\#DA\#A)\\
  &=\n^2A+\Pi\#A\#\n A+(D\Pi+\Pi\#\Pi)\#A^3.
\end{align*}
Inductively, we can derive for any $k\ge1$,
\begin{equation}\label{e-app-1}
  D^kA=\n^kA+\sum_J C_J(\Pi)\#\n^{j_1}A\#\cdots\#\n^{j_s}A,
\end{equation}
or equivalently,
\begin{equation}\label{e-app-2}
  D^{k}d\rho=\n^kA+\sum_J C_J(\Pi)\#\n^{j_1}d\rho\#\cdots\#\n^{j_s}d\rho,
\end{equation}
where $C_J(\Pi)$ is a linear combination of $\Pi$ and its derivatives, and the summation is taken for all indices $J=(j_1,\cdots, j_s)$ with
\begin{equation*}
 0\le j_i\le k-1 \text{~and~} \sum_i(j_i+1)=\sum_i j_i + s = k+1,
\end{equation*}

Now for $k\ge 1$, define the Sobolev norms
\[ \norm{A}_{H^{k,2}}=\(\sum_{l=0}^k\int |\n^l A|^2\)^{1/2},\]
and
\[ \norm{d\rho}_{W^{k,2}}=\(\sum_{l=0}^k\int |D^l d\rho|^2\)^{1/2}.\]

\begin{thm}\label{t-equivalence}
For any compact immersed submanifold $F:\Si^n\to \Real^m$ and $k\ge l_0:=[n/2]$, suppose $\vol+\norm{H}_p\le B$ for some $p>n$, then there exists two constants $C_1(k, B)$ and $C_2(k, B)$ which only depend on $k$ and $B$ (and is independent of the submanifold), such that
\begin{equation}\label{e:app-1}
 \norm{A}_{H^{k,2}}\le C_1(k, B)\sum_{i=1}^{k+1}\norm{d\rho}_{W^{k,2}}^i,
 \end{equation}
and
\begin{equation}\label{e:app-2}
  \norm{d\rho}_{W^{k,2}}\le C_2(k, B)\sum_{i=1}^{k+1}\norm{A}_{H^{k,2}}^i.
\end{equation}
\end{thm}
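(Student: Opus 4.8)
The plan is to prove both inequalities directly from the algebraic identity (\ref{e-app-1}) (equivalently (\ref{e-app-2})), which relates $D^k d\rho$ and $\n^k A$ modulo nonlinear terms of lower order, together with the uniform Gagliardo--Nirenberg inequality of Theorem~\ref{t-lp}. Two features of the hypotheses are essential. First, $G=G(n,m-n)$ is a \emph{fixed} compact manifold, so its second fundamental form $\Pi$ in $\Ld=\Ld^n\Real^m$ and all of its covariant derivatives are bounded by constants depending only on $n$ and $m$; consequently every coefficient $C_J(\Pi)$ appearing in (\ref{e-app-1}) is controlled by such a universal constant. Second, the assumption $\vol+\norm{H}_p\le B$ with $p>n$ guarantees, through Theorem~\ref{t-lp}, that all the Sobolev and interpolation constants used below are uniform, depending only on $k$, $B$ and $n$ --- which is exactly why these quantities are kept in the definition of $\E_k$.

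For the easy inequality (\ref{e:app-2}) I would rewrite (\ref{e-app-1}), applied at each level $l\le k$, as
\[ D^l d\rho = D^l A = \n^l A + \sum_J C_J(\Pi)\#\n^{j_1}A\#\cdots\#\n^{j_s}A, \]
where each factor has order $j_i\le l-1$, the number of factors satisfies $s\ge 2$, and the total order is $\sum_i j_i = l+1-s\le l-1$. Taking $L^2$ norms, the leading term gives $\norm{\n^l A}_2\le \norm{A}_{k,2}$, and each multilinear term is handled by H\"older's inequality followed by Theorem~\ref{t-lp}: one picks exponents $p_i$ with $\sum_i 1/p_i=1/2$ and $\norm{\n^{j_i}A}_{p_i}\le C\norm{A}_{k,2}$, which is possible because the feasibility condition $\sum_i\big(1/2-(k-j_i)/n\big)\le 1/2$ reduces, using $l\le k$, to $k\ge n/2-1$, and hence holds for $k\ge[n/2]$. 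Thus every such term is $\le C\norm{A}_{k,2}^s$, and summing over $J$ and over $l\le k$ yields (\ref{e:app-2}) with a polynomial of degree $k+1$ (the maximal number of factors, realized by the term $A\#\cdots\#A$).

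For the harder inequality (\ref{e:app-1}) I would not attempt to absorb any top-order term --- the inequality one obtains naively from (\ref{e-app-1}) has $\norm{A}_{k,2}$ on both sides and is not closed --- and instead substitute (\ref{e-app-1}) recursively into itself to eliminate all covariant derivatives $\n^{(\cdot)}A$ from the right-hand side. Since at each level every factor has strictly smaller order, the recursion terminates (bottoming out at $\n^0 A=d\rho$) and produces
\[ \n^k A = D^k d\rho + \sum_\alpha C_\alpha(\Pi)\#D^{i_{\alpha,1}}d\rho\#\cdots\#D^{i_{\alpha,r_\alpha}}d\rho. \]
The key bookkeeping here is a weight count: assign weight $j+1$ to $\n^j A$ and to $D^j d\rho$; each application of $D$ raises the total weight by one (it either differentiates a $d\rho$-factor, or by the chain rule differentiates a $\Pi$-coefficient and spawns a new $d\rho$-factor), so every monomial above has total weight $k+1$, i.e.\ $\sum_\ell (i_{\alpha,\ell}+1)=k+1$. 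Since $r_\alpha\ge 2$, this forces $i_{\alpha,\ell}\le k-1$ and $r_\alpha\le k+1$. Taking $L^2$ norms and applying H\"older and Theorem~\ref{t-lp} to the tensor $d\rho$ exactly as in the easy direction gives $\norm{D^{i_{\alpha,1}}d\rho\#\cdots\#D^{i_{\alpha,r_\alpha}}d\rho}_2\le C\norm{d\rho}_{W^{k,2}}^{r_\alpha}$; summing over $\alpha$ and over $l\le k$ proves (\ref{e:app-1}).

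The step I expect to be the main obstacle is the uniform multilinear estimate: that a product of $r\ge 2$ covariant-derivative tensors of total order at most $k-1$ lies in $L^2$ with norm bounded by $C$ times the $r$-th power of the relevant $W^{k,2}$-norm, with $C$ independent of the submanifold. This is a careful bookkeeping of Sobolev exponents that succeeds precisely when $k\ge[n/2]$; the borderline value $k=[n/2]$ is genuinely critical --- for odd $n$ and large $r$ one lands on the endpoint of a Sobolev embedding --- so one must distribute the exponents with care so that Theorem~\ref{t-lp} applies, using, when needed, that on a compact manifold $L^{q'}\hookrightarrow L^{q}$ for $q'\ge q$. Together with the elementary but slightly delicate weight count that caps the number of factors at $k+1$ after the recursive substitution, these are the only non-routine points; everything else is the universal interpolation machinery supplied by Theorem~\ref{t-lp}.
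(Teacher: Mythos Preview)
Your proposal is correct and matches the paper's approach: the paper's own proof simply invokes the identities (\ref{e-app-1})--(\ref{e-app-2}) together with the uniform interpolation inequality of Theorem~\ref{t-lp}, and then cites Proposition~2.2 of Ding--Wang~\cite{DW-Schrodinger-2001} for the remaining steps. What you have written is precisely a fleshed-out version of that argument --- the recursive substitution to express $\n^k A$ purely in terms of $D^j d\rho$-factors, the weight bookkeeping $\sum(j_i+1)=k+1$ capping the number of factors at $k+1$, and the H\"older/Gagliardo--Nirenberg estimate for the multilinear terms --- so there is no substantive difference in method.
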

\begin{proof}
Since by assumption $\vol+\norm{H}_p\le B$, we have uniform interpolation inequalities by Theorem~\ref{t-lp}. Then in view of (\ref{e-app-1}) and (\ref{e-app-2}), the proof follows step by step from Proposition 2.2 in Ding-Wang~\cite{DW-Schrodinger-2001},.
\end{proof}

\begin{thm}\label{c:equivalence}
  For any compact immersed submanifold $F:\Si^n\to \Real^m$ and $k\ge l_0:=[n/2]$, the energy $\E_k=\vol+\norm{H}_p^2 +\norm{A}_{H^{k,2}}^2$ is equivalent to $\bar{\E}_k=\norm{\rho}_{W^{k+1,2}}^2$. Namely, there exists two functions $f_k$ and $g_k$ which only depend on $k$ (and  is independent of the submanifold), such that
  \[ \b{\E}_k\le f_k({\E}_k), \quad {\E}_k\le g_k(\b{\E}_k).\]
\end{thm}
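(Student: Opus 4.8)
The plan is to derive the equivalence directly from Theorem~\ref{t-equivalence}, which already gives the two-sided polynomial comparison between $\norm{A}_{H^{k,2}}$ and $\norm{d\rho}_{W^{k,2}}$ once $\vol+\norm{H}_p\le B$ is assumed. The only gap between Theorem~\ref{t-equivalence} and the present statement is bookkeeping: (i) Theorem~\ref{c:equivalence} has no a~priori bound $B$ built in, so $B$ must be extracted from the energy itself; (ii) the Sobolev norm of the Gauss map here is $\norm{\rho}_{W^{k+1,2}}$, i.e. it includes one extra derivative and the $\rho$-term itself, rather than $\norm{d\rho}_{W^{k,2}}$; and (iii) the volume and $\norm{H}_p$ terms appearing in $\E_k$ must be matched against $\bar\E_k$. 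So the first step is simply to observe that $\norm{H}_p\le C\norm{H}_{k-1,2}\le C\norm{A}_{k,2}$ for $k\ge k_0$ by the Sobolev embedding in Theorem~\ref{t-lp} (using that $\vol$ is controlled), hence $\vol+\norm{H}_p$ is bounded by a function of $\E_{k}(F)$ alone; more precisely one takes $B=B(\E_{k_0}(F))$ and notes the interpolation constants then depend only on this $B$.

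Second, I would record the trivial relation between $\norm{\rho}_{W^{k+1,2}}$ and $\norm{d\rho}_{W^{k,2}}$: since $\rho$ maps into the compact manifold $G\hookrightarrow\Ld$, one has $|\rho|\le C$ pointwise (the norm of a unit decomposable $n$-vector), so $\norm{\rho}_2^2\le C\,\vol$, and $\norm{\rho}_{W^{k+1,2}}^2 = \norm{\rho}_2^2 + \norm{d\rho}_{W^{k,2}}^2$ up to the usual identification of $D^{l+1}\rho$ with $D^l d\rho$. Thus $\bar\E_k = \norm{\rho}_{W^{k+1,2}}^2$ differs from $\vol + \norm{d\rho}_{W^{k,2}}^2$ only by uniform constants. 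This reduces the theorem to comparing $\vol+\norm{H}_p^2+\norm{A}_{H^{k,2}}^2$ with $\vol+\norm{d\rho}_{W^{k,2}}^2$.

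Third comes the actual estimate. For the direction $\bar\E_k\le f_k(\E_k)$: bound $\vol$ by $\E_k$ directly, bound $\norm{d\rho}_{W^{k,2}}$ by $C_2(k,B)\sum_{i=1}^{k+1}\norm{A}_{H^{k,2}}^i$ via \eqref{e:app-2} with $B=B(\E_{k_0}(F))$, and then absorb everything into a polynomial $f_k$ in $\E_k$ (note $\norm{A}_{H^{k,2}}^2\le\E_k$, so $\norm{A}_{H^{k,2}}^i\le\E_k^{i/2}$, and the coefficients $C_2(k,B)$ become a function of $\E_{k_0}(F)\le\E_k(F)$, which is still acceptable since we only need dependence on $k$ — here one uses that $C_2(k,B)$ is monotone in $B$ and $B$ itself is a fixed function of $k$-independent data, so $C_2(k,B)$ can be folded into a function of $k$ and of $\E_k$). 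For the reverse direction ${\E}_k\le g_k(\bar\E_k)$: first note $\norm{d\rho}_{W^{k,2}}\le\bar\E_k^{1/2}$ controls in particular $\norm{d\rho}_2$, and since $|A|=|d\rho|$ pointwise one gets $\norm{H}_p\le C\norm{A}_{k-1,p}$ bounded through \eqref{e:app-1} and interpolation; then apply \eqref{e:app-1} to bound $\norm{A}_{H^{k,2}}$ by a polynomial in $\norm{d\rho}_{W^{k,2}}\le\bar\E_k^{1/2}$; finally $\vol$ is comparable to $\norm{\rho}_2^2\le\bar\E_k$. Collecting terms gives a polynomial $g_k$ in $\bar\E_k$.

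The one genuinely delicate point — and the step I expect to be the main obstacle — is circularity in the constant $B$. Theorem~\ref{t-equivalence} requires $\vol+\norm{H}_p\le B$ as a hypothesis and its constants $C_1,C_2$ depend on $B$; but in Theorem~\ref{c:equivalence} no such $B$ is given, only the energies $\E_k$ or $\bar\E_k$ themselves. One must therefore verify that $\vol+\norm{H}_p$ is a priori controlled by whichever energy is being used as the hypothesis: for the direction starting from $\E_k$ this is immediate since $\vol$ and $\norm{H}_p$ are (essentially) summands of $\E_k$ (with $\norm{H}_p\le\norm{H}_2+\norm{\n H}_p$ handled by the definition and embedding), but for the direction starting from $\bar\E_k$ one needs that $\norm{H}_p$ and $\vol$ are bounded purely in terms of $\norm{\rho}_{W^{k+1,2}}$, which follows because $|\rho|$ is pointwise bounded (giving $\vol\le C\norm\rho_2^2$) and $H$ is a contraction of $d\rho\otimes d\rho$-type expressions, hence $\norm{H}_p$ is controlled by $\norm{d\rho}_{W^{1,p}}$ and thus by $\norm{d\rho}_{W^{k,2}}$ via Sobolev embedding (here $k\ge[n/2]$ is exactly what makes $W^{k,2}\hookrightarrow W^{1,p}$ for some $p>n$). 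Once this self-consistency is checked, the rest is routine and the functions $f_k,g_k$ are obtained by chaining the polynomial inequalities above. I would state $f_k,g_k$ as explicit polynomials with coefficients depending on $k$ only, suppressing the harmless dependence of the interpolation constants on the (now controlled) quantity $B$.
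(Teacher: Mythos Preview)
Your overall strategy matches the paper's: reduce to Theorem~\ref{t-equivalence} after observing that $|\rho|=1$ gives $\norm{\rho}_{W^{k+1,2}}^2=\vol+\norm{d\rho}_{W^{k,2}}^2$, and handle the direction $\bar\E_k\le f_k(\E_k)$ by simply taking $B=\E_k$ (since $\vol$ and $\norm{H}_p^2$ are literally summands of $\E_k$). That part is fine, though your opening remark about bounding $\norm{H}_p$ through $\norm{A}_{k,2}$ via Theorem~\ref{t-lp} is both unnecessary and itself circular --- just use $\norm{H}_p^2\le\E_k$ directly.

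The genuine gap is in the reverse direction $\E_k\le g_k(\bar\E_k)$, precisely at the step you flagged as delicate. You propose to bound $\norm{H}_p$ by $\norm{d\rho}_{W^{k,2}}$ ``via Sobolev embedding $W^{k,2}\hookrightarrow W^{1,p}$'', but on a submanifold the only uniform Sobolev embedding available is Theorem~\ref{t-lp}, and its constants already depend on $\vol+\norm{H}_p$ --- so this is exactly the circularity you were trying to avoid, not a resolution of it. (Incidentally, $H$ is the trace of $A\cong d\rho$, not a quadratic contraction, so one has simply $\norm{H}_p\le C\norm{d\rho}_p$; but that does not help.) The paper breaks the loop differently: it applies the \emph{universal} interpolation inequality of Theorem~\ref{t-universal}, whose constant depends only on dimension and exponents and not on the submanifold, to the section $s=\rho$ with $j=1$, $k$ replaced by $l_0+1$, $q=2$, $r=\infty$. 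This yields
\[
\norm{A}_p=\norm{d\rho}_p\le C\,\norm{D^{l_0+1}\rho}_2^{1/(l_0+1)}\,\norm{\rho}_\infty^{1-1/(l_0+1)},\qquad p=2(l_0+1)>n,
\]
and the crucial point is that $\norm{\rho}_\infty=1$ because $\rho$ takes values in the unit sphere of $\Ld$. Hence $\vol+\norm{H}_p$ is controlled by a universal function of $\bar\E_k$ with no hidden dependence on $B$; only after this is secured does Theorem~\ref{t-equivalence} apply and the rest of your argument go through.
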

\begin{proof}
  First note that $|\rho|=1$ since the image of $\rho$ lies in $G$ which is contained in the unit sphere in $\Ld$. Thus $\int|\rho|^2=\vol$ and $\norm{\rho}_{W^{k+1,2}}^2=\vol +\norm{d\rho}_{W^{k,2}}^2$.

  If we take $B=\E_k$ in Theorem~\ref{t-equivalence}, then by (\ref{e:app-2}),
  \[ \b{\E}_k=\vol +\norm{d\rho}_{W^{k,2}}^2\le \vol+C_2(k, B)\sum_{i=1}^{k+1}\norm{A}_{H^{k,2}}^i.\]
  So it is easy to find the desired function $f_k$ such that $\b{\E}_k\le f_k(\E_k)$.

  On the other hand, by letting $j=1, k=l_0+1, r=\infty, q=2$ in the universal interpolation inequality of Theorem~\ref{t-universal}, we have
  \[ \norm{A}_p=\norm{d\rho}_p\le C\norm{D^{l_0}d\rho}^{\frac{1}{l_0+1}}_2\norm{\rho}^{1-\frac{1}{l_0+1}}_{\infty}\le C\b{\E}_k^{\frac{1}{l_0+1}}.\]
  where $p=2(l_0+1)>n$. Therefore, we get
  \[\vol+\norm{H}_p^2\le B':=\b{\E}_k+C\b{\E}_k^{\frac{2}{l_0+1}}.\]
  Then by (\ref{e:app-1}) in Theorem~\ref{t-equivalence}, there exists a function $g_k$ such that ${\E}_k\le g_k(\b{\E}_k)$.
\end{proof}

\bibliographystyle{alpha}
\bibliography{SMCF-bib}
\end{document}